\documentclass{aptpub}
\usepackage{chicago}
\usepackage{graphicx}
\usepackage{amssymb}
\usepackage{amsmath}
\usepackage{subfigure}

\addtolength{\textwidth}{3.5cm} \addtolength{\hoffset} {-0.5cm}

\textheight 9in
\topmargin -0.5in

\newcommand{\RR}{\mathbb R}
\newcommand{\GG}{\mathcal G}
\newcommand{\NN}{\mathbb N}
\newcommand{\CC}{\mathcal C}
\newcommand{\KK}{\mathcal K}

\authornames{S. Jaimungal, A. Kreinin, and A. Valov} 
\shorttitle{Randomized First Passage Times}

\begin{document}

\title{Randomized First Passage Times}

\authorone[Department of Statistics, University of Toronto]{Sebastian Jaimungal}
\addressone{100 St. George Street, Toronto, Ontario, Canada M5S 3G3}

\authortwo[Algorithmics Inc.]{Alex Kreinin}
\addresstwo{Algorithmics Inc., 185 Spadina Avenue, Toronto, Ontario, Canada M5T 2C6}

\authorone[Department of Statistics, University of Toronto]{Angelo Valov}

\begin{abstract}
In this article we study a problem related to the first passage and inverse first passage time problems for Brownian motions originally formulated by \citeN{JacksonKreininZhang2008}. Specifically, define $\tau_X = \inf\{t>0:W_t + X  \le b(t) \}$ where $W_t$ is a standard Brownian motion, then given a boundary function $b:[0,\infty) \to \RR$ and a target measure $\mu$ on $[0,\infty)$, we seek the random variable $X$ such that the law of $\tau_X$ is given by $\mu$. We characterize the solutions, prove uniqueness and existence and provide several key examples associated with the linear boundary.
\end{abstract}

\section{Introduction}

In this paper we examine a problem related to both the first-passage and inverse passage time problems. It was originally formulated by \citeN{JacksonKreininZhang2008} for the case of a linear boundary. In general, let $\{\Omega, \mathbb P, \mathbb F \}$ denote a complete probability space, and let $\mathbb F=\{\textsl{F}_t\}_{0\leq t\leq T}$ denote the natural filtration generated by the standard Brownian motion $W_t$. Consider a deterministic function $b:[0,\infty)\rightarrow \mathbb{R}$ with $b(0)>-\infty$ (this will represent the stopping boundary for the Brownian motion) and a random variable $X\geq b(0)$ (this will represent the randomization of the initial starting point of the Brownian motion). We will assume that $X$ spans the $\sigma$-algebra $\textsl{F}_0$ and is independent of the Brownian motion. Define the stopping time of the randomized Brownian motion hitting the boundary $b(t)$ as follows
\begin{eqnarray}
\tau_X \triangleq \inf\{\ t>0\ ;\ W_t+X \leq b(t) \ \}\ . \label{rfpt}
\end{eqnarray}
Without loss of generality we can take $b(0)=0$ so that $X$ is non-negative (see Figure \ref{randomFPT}).
Given the above, we are interested in the following problem:
\begin{defn}[Randomized First Passage Time Problem (RFPT)]
Given a boundary function $b:[0,\infty)\rightarrow \RR$,
and a probability measure $\mu$ on $[0,\infty)$, find a random variable $X$ such that $\mu$ is the law of the randomized first passage time $\tau_X$.
\end{defn}
\begin{figure}[t!]
\begin{center}
\label{randomFPT}
\includegraphics[scale=0.5]{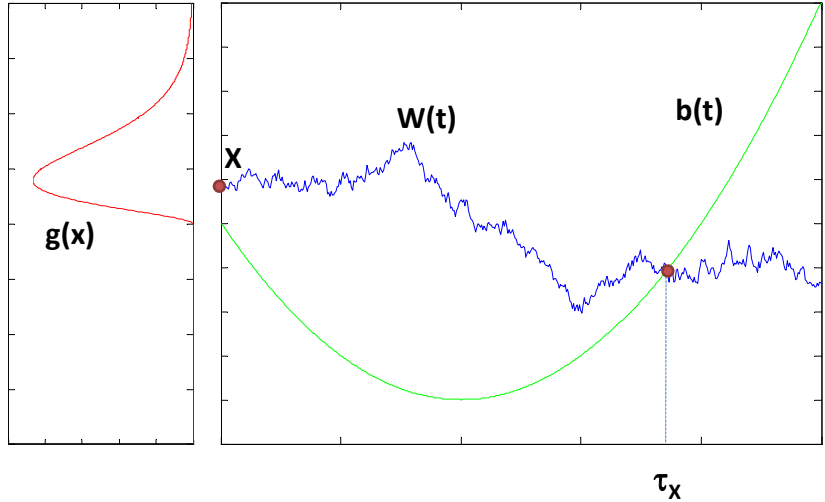}
\vspace{-1em}
\end{center}
\caption{A sample path of the randomized Brownian motion $W(t)+X$. $g(x)$ is the density of the starting point and $\tau_X$ is the first passage time of the randomized Brownian motion to the boundary $b(t)$. }
\end{figure}

Although the RFPT problem is stated in full generality, we will assume that $b(t)$ is continuously differentiable in order to ensure that the conditional distribution of $\tau|_{X=x}$ has a continuous density function $f(t|x)$ (see \citeN{Peskir1}). Furthermore, we will assume that $\mu$ is absolutely continuous with density function $f(t)$ which we refer to as the \textit{target density}. Then the RFPT probelm is equivalent to solving the Fredholm integral equation:
\begin{eqnarray}\label{eqn:Fredholm}
\int_0^{\infty}f(t|x)g(x)dx=f(t) \ . \label{eqn:matchmain}
\end{eqnarray}
Here, the conditional density of the hitting time $f(t|x)$ acts as the kernel in the integral equation. In general we seek solutions to (\ref{eqn:matchmain}) in the class of functions $\GG:=\{\ g \ ; \ \int_{R^+}|g|<\infty\ or\ |g|<K \ \}$; however, our main focus is on the class of density functions on the positive real line which is included in $\GG$.

In this paper we address the questions of existence (see Theorem \ref{existence}) and uniqueness (see Propositions \ref{prop:hermitetransform} and \ref{prop:laplacetransform}) of the distribution of the random initial point $X$. Theorem \ref{existence} is one of the main results of this work and it implies the existence of a random variable X with moment generating function $\tilde{g}$ given by
\begin{eqnarray}
\tilde{g}(\alpha)=\int_0^{\infty}e^{-\alpha b(t)-\alpha^2t/2}f(t)dt \ . \label{eqn:Result1}
\end{eqnarray}
This existence result is very powerful, and provides a relationship between the Laplace transform of the initial point and a boundary specific integral transform of the target density. For particular boundaries, this integral transform is of a standard type. For example, if the boundary is linear $b(t)=\mu t$, then the integral transform in the rhs of \eqref{eqn:Result1} is simply the Laplace transform at the point $\alpha \mu + \frac{1}{2}\alpha^2$. However, in general the integral transform may not be easily identifiable; nonetheless, one can in principle compute the integral transform numerically and then numerically invert $\tilde{g}$ to obtain the density.

Note that, since the boundary $b(t)$ defines the conditional distribution of the stopping time $f(t|x)$ uniquely, when $X$ is a discrete random variable the distribution of the unconditional stopping time will be a mixture of the conditional densities as seen from (\ref{eqn:Fredholm}). The reverse statement, however, does not hold. We show this in the case $b(t)=\mu t$. Given the well known fact that continuous distributions on the positive real line can be arbitrarily well approximated (at least point-wise) by a mixture of Gamma distributions (see e.g. \citeN{Tjims95}), we derive analytical formula for the density function of $X$ when the target distribution of $\tau_X$ is a finite mixture of Gamma distributions. This formula allows us to extend our analytical results for $g$ to a very large class of target distributions.


When $X$ is non-random, the RFPT problem reduces to the classical first-passage time (FPT) problem for Brownian motion if we take the boundary function $b(.)$ as input and seek the density function $f(.)$. The FPT problem has a long history and dates back to the work of A.N. Kolmogorov, A. Khinchin, I. Petrovsky and P. Levy. A good summary of the their early work can be found in \citeN{Khinchine33}. The available closed form results appear to be sparse, fragmentary and essentially confined to the linear, quadratic (see \citeN{Salminen88}) and square root (see \citeN{Novikov81}) boundaries. A unified approach for the derivation of the analytical results for these three cases was presented in \citeN{FPTpaper} based on the Fredholm equation
\begin{eqnarray}
\int_0^{\infty}e^{-\alpha b(t)-t\alpha^2/2}f(t)dt=1 \ ,\label{eqn:mainreal}
\end{eqnarray}
where $f$ is the density function of the first passage time of the Brownian motion to the boundary $b(t)$. Equation (\ref{eqn:mainreal}) holds for all $\alpha$  and continuous functions $b(.),\ b(0)>-\infty,$ satisfying
\begin{eqnarray}
\lim_{t \uparrow \infty}(b(t)+\alpha t) > -\infty \ .\label{fredholmcondition}
\end{eqnarray}
This condition simply states that the drifted boundary is uniformly bounded below.

\citeN{FPTpaper} also derive a class of Volterra Integral equations of first kind which generalizes and unifies all previous known FPT/IFPT integral equations. A key element in that work was the construction of the following new class of martingales.
\begin{prop}
The process
\begin{equation}
Z_s := m(s,W_s; p, t), \qquad m(s,w; p,t) = \frac{e^{-\frac{(w-y)^2}{4(t-s)} }  D_p\left( (w-y) / \sqrt{t-s}\right) }{ (t-s)^{(p+1)/2} } ,
\end{equation}
is a real valued martingale on $s\in[0,t)$ for all $p,y\in \mathbb R$, $t>0$. Here $D_p(z)$ is the parabolic cylinder function -- for some of its properties see Appendix \ref{sec:ParabolicCylinder}.
\end{prop}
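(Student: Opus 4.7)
The plan is a direct Itô-calculus argument: show that $u(s,w):= m(s,w;p,t)$ satisfies the backward heat equation on $(s,w)\in[0,t)\times\RR$, apply Itô to deduce $Z_s$ is a local martingale with zero drift, and then upgrade this to a true martingale via growth estimates on $D_p$.

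For the PDE step I would introduce the similarity variable $\xi=(w-y)/\sqrt{t-s}$ and factor $u(s,w)=(t-s)^{-(p+1)/2}\phi(\xi)$ with $\phi(\xi):=e^{-\xi^2/4}D_p(\xi)$. A straightforward chain-rule calculation shows that $\partial_s u+\tfrac12 \partial_{w}^2 u=0$ is equivalent to the ODE
\begin{equation*}
\phi''(\xi)+\xi\,\phi'(\xi)+(p+1)\phi(\xi)=0.
\end{equation*}
Differentiating $\phi$ twice and substituting produces $e^{-\xi^2/4}[D_p''(\xi)+(p+\tfrac12-\tfrac{\xi^2}{4})D_p(\xi)]$, which vanishes by Weber's equation for the parabolic cylinder function (listed in the appendix). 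This verifies the backward heat equation. Applying Itô's formula on any interval $[0,t')$ with $t'<t$ then gives
\begin{equation*}
dZ_s = \partial_w u(s,W_s)\,dW_s,
\end{equation*}
so $Z$ is a continuous local martingale on $[0,t)$.

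The only remaining issue is promoting this to a true martingale. Here I would use the standard asymptotics of $D_p$: as $\xi\to+\infty$, $D_p(\xi)\sim \xi^p e^{-\xi^2/4}$, so $\phi(\xi)$ decays like $\xi^p e^{-\xi^2/2}$; as $\xi\to-\infty$, $D_p(\xi)$ grows like $|\xi|^{-p-1}e^{\xi^2/4}$ (plus a polynomial term), whence $\phi(\xi)$ grows only polynomially. Combined with the Gaussian law of $W_s$, these bounds yield $\mathbb E[u(s,W_s)^2]<\infty$ uniformly for $s\in[0,t']$, and similarly for $\partial_w u$. Hence the stochastic integral has square-integrable integrand on $[0,t']$ and $Z$ is a martingale on each $[0,t']$ with $t'<t$, which is the claim.

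The main obstacle is this last integrability step, because the parabolic cylinder function has genuinely different behaviour at $\pm\infty$ and one must check that the Gaussian factor $e^{-\xi^2/4}$ built into $\phi$ is enough to tame the $e^{\xi^2/4}$ blow-up of $D_p$ on the left tail; this works because only a polynomial in $\xi$ survives after the cancellation, and integrating against the $W_s$-density costs only another Gaussian weight that is strictly sub-dominant for $s<t$. All other steps (the PDE verification and the Itô application) are mechanical once Weber's equation is in hand.
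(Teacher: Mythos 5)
Your proof is sound, and the key computation checks out: with $\xi=(w-y)/\sqrt{t-s}$ and $\phi(\xi)=e^{-\xi^2/4}D_p(\xi)$ one finds $\partial_s u+\tfrac12\partial^2_w u=\tfrac12(t-s)^{-(p+3)/2}\bigl[\phi''(\xi)+\xi\phi'(\xi)+(p+1)\phi(\xi)\bigr]$, and the bracket equals $e^{-\xi^2/4}\bigl[D_p''(\xi)+\bigl(p+\tfrac12-\tfrac{\xi^2}{4}\bigr)D_p(\xi)\bigr]$, which vanishes by Weber's equation (\ref{diffeqn1}); It\^o's formula then makes $Z$ a continuous local martingale on $[0,t)$, and since on $[0,t']$, $t'<t$, the factor $t-s$ is bounded away from zero, the at-most-polynomial growth of $\phi$ and of $\phi'(\xi)=-e^{-\xi^2/4}D_{p+1}(\xi)$ (a standard recurrence) against the Gaussian marginals of $W_s$ gives the $L^2$ bounds that upgrade the local martingale to a true martingale on each $[0,t']$, hence on $[0,t)$. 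One small slip, which does not affect the conclusion: as $\xi\to-\infty$ the subdominant piece of $D_p(\xi)$ is of order $|\xi|^{p}e^{-\xi^2/4}$ (Gaussian-decaying), not polynomial, while the dominant piece is $O\bigl(|\xi|^{-p-1}e^{\xi^2/4}\bigr)$ and is absent altogether when $p$ is a nonnegative integer (then $1/\Gamma(-p)=0$ and $D_p$ is a Hermite polynomial times a Gaussian); in every case $\phi$ is polynomially bounded, which is all your integrability step needs. As for comparison with the paper: this proposition is stated here without proof and is imported from \citeN{FPTpaper}, so there is no in-paper argument to measure yours against; your backward-heat-equation/Weber-equation verification is a legitimate, self-contained justification of the quoted result. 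An alternative route, closer in spirit to the special-function toolkit of Appendix \ref{sec:ParabolicCylinder}, would be to verify $\mathbb E[m(s',W_{s'};p,t)\mid \textsl{F}_s]=m(s,W_s;p,t)$ directly by convolving the kernel with the Gaussian transition density, using the integral representation (\ref{pcfintegral}) for $p<0$ and extending to general $p$ by the recurrence relations; your PDE argument avoids that bookkeeping entirely.
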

Armed with this martingale, the authors then derive the class of Volterra integral equations
\begin{eqnarray}\label{matchvolterra}
m(t,0; p, t) = \int_0^t m(s,b(s); p,t) f(s) \ ds \ ,
\end{eqnarray}
for $y<b(t)$, by invoking the optional sampling theorem on the martingale $Z_s$. Furthermore, by passing to the limit $y \uparrow b(t)$, the authors show that the resulting class of Volterra equations contains the class of equations of \citeN{Peskir1} and show that the FPT density is the unique continuous solution to any member of this class under certain regularity conditions on the boundary.

When $X$ is non-random and we take the density function $f(.)$ as an input, the RFPT problem reduces to the inverse first-passage time (IFPT) problem which seeks the boundary function $b(.)$. The problem was first posed by A. Shiryaev in 1976 for the case of the exponential probability density. An early paper by \citeN{Anulova80} deals with the existence of some stopping times for a given distribution, however, these stopping times are not of the form (\ref{rfpt}) for some function $b$. One of the main contributions to the description of the inverse problem thus far, was provided by \citeN{Saunders1} where they demonstrate the existence of a unique viscosity solution for the IFPT problem, from a PDE perspective, and describe the small time behaviour of the boundary function. Note that in the context of the IFPT problem the integral equations (\ref{matchvolterra}) are nonlinear.

Randomizing the starting point of the Brownian motion allows us to bypass both the first passage time and inverse first passage time problems. This is achieved by assuming the pair $(b,f)$ as given while trying to match the density $f$ by randomizing with a density function $g$. Furthermore the randomization allows the distribution of the hitting time $\tau_X$, and thus of the stopped process $W_{\tau_X}$, to probe a much wider class of distributions than if the Brownian motion starts at a fixed point. For example, in the linear boundary case, with slope equal to one, we have the relation $W_{\tau_X}=\tau_X-X$. If we can imply the distribution of $X$ from any distribution of $\tau_X$, then we may be able to describe the resulting class of distributions for $W_{\tau_X}$. This is tantalizingly close to the statement of Skorohod's embedding problem (see \citeN{Skorohod}), where one seeks a stopping time $\tau$ such that the stopped Brownian motion $W_\tau$ has a given distribution. Here, our class of stopping times $\tau_X$ is generated by fixing the boundary $b$ and randomizing the starting point $X$, and through the connection $W_{\tau_X} = b(\tau_X)$ the distributions of the stopped Brownian motion and the stopping time is clear.

If we attack the RFPT problem by seeking a direct solution to (\ref{eqn:matchmain}) it may seem, at a first glance, that this could be a formidable task since the kernel of this Fredholm equation, $f(t|x)$, is unknown for most boundary functions. However, as we will discover below, the problem is simpler than both the FPT and IFPT problems and we obtain analytical and semi-analytical results for certain transforms of the matching distribution using the Volterra and Fredholm integral equations derived in \citeN{FPTpaper}.

The remainder of the paper is organized as follows. In Section 1, we assume the existence of a function $g$ which solves (\ref{eqn:matchmain}) and, under certain conditions on the boundary $b$, derive unique integral transforms of $g$. The integral transforms lead to uniqueness of the solution and provide us with a way to compute $g$ analytically. In Section 2, we address the question of existence of the random variable $X$. In Section 3, we look at the linear boundary case and compute $g$ analytically for a class of target distributions. Moreover, we present a number of examples for the pair $(g,f)$. In Section 4, we examine the effect of an affine boundary transformation on the target density. For the case of the linear boundary, we explore the relationship between the target densities corresponding to different slopes assuming the same density function $g$. Furthermore, we analyze the implications of a certain scaling property of the boundary and in the linear boundary case we look at an extension of the RFPT problem when both the intercept and the slope are random. Finally, we end with some concluding remarks in Section 5. A number of technical proofs are delegated to the appendix.

\section{Uniqueness}

The integral equations (\ref{matchvolterra}) take on a particularly useful form when $p$ is restricted to the integers. In this case, the parabolic cylinder function $D_n$ is related to the Hermite polynomials $H_n$ as follows ($n\in\mathbb Z$):
\begin{equation}
D_n(z)=2^{-n/2}e^{-z^2/4}H_n\left(z/\sqrt{2}\right)\ .
\end{equation}
This allows equations (\ref{matchvolterra}), for $p=n$, to be rewritten as:
\begin{eqnarray}
\frac{e^{-\frac{y^2}{2t}}H_n(-y/\sqrt{2t})}{t^{(n+1)/2}}=\int_0^t\frac{e^{-\frac{(b(s)-x-y)^2}{2(t-s)}}}{(t-s)^{(n+1)/2}}H_n\left(\frac{b(s)-x-y}{\sqrt{2(t-s)}}\right)f(s|x)ds \ , \label{eqn:general1}
\end{eqnarray}
for any $y<b(t)-x$. Here we have the equation in the RFPT form by using the conditional hitting density $f(t|x)$. Note that since the Hermite polynomials form a complete orthogonal basis in $L^2(\mathbb R, e^{-x^2})$ with respect to the standard normal distribution, \eqref{eqn:general1} allows for a unique series representation of the density function $g$ whenever it exists. Here we seek uniqueness for the larger class $\GG$ which contains, but is not limited to, the class of densities. This leads us to the following results.
\begin{prop}\label{prop:hermitetransform}
Suppose $b:[0,\infty)\mapsto \mathbb R$ is a continuous function and there exists a $t>0$ such that $b(t)>0$. Then, if (\ref{eqn:matchmain}) has a solution $g\in G \bigcap L^2(\mathbb R, e^{-x^2})$, it is unique and it is given by
\begin{eqnarray}
g(x)=\frac{1}{\sqrt{2 \pi}}\sum_{n=0}^{\infty}\frac{t^{n/2}a_n(t)}{2^n n!}H_n(x/\sqrt{2t}) \ , \label{eqn:hermitetransform}
\end{eqnarray}
for any $t>0$ such that $b(t)>0$, and where
\begin{eqnarray} a_n(t):=\int_0^t\frac{e^{-\frac{b(s)^2}{2(t-s)}}}{(t-s)^{(n+1)/2}}H_n\left(\frac{b(s)}{\sqrt{2(t-s)}}\right)f(s)ds \ .
\end{eqnarray}
\end{prop}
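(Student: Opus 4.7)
The plan is to recover the Hermite moments of $g$ by integrating the Volterra identity \eqref{eqn:general1} (with $p=n$) against $g$, and then to invert using completeness of the Hermite basis in a weighted $L^{2}$ space.

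First, I fix an arbitrary $t>0$ with $b(t)>0$ and specialize \eqref{eqn:general1} to the choice $y=-x$. The hypothesis $b(t)>0$ guarantees the admissibility constraint $y<b(t)-x$ for every $x\geq 0$, since it reduces to $0<b(t)$. This substitution is the crux of the argument: the combination $b(s)-x-y$ collapses to $b(s)$, thereby eliminating every occurrence of $x$ from the right-hand side except inside $f(s\,|\,x)$. The identity becomes
\begin{equation*}
\frac{e^{-x^{2}/(2t)}}{t^{(n+1)/2}}\,H_{n}\!\left(\frac{x}{\sqrt{2t}}\right) \;=\; \int_0^t \frac{e^{-b(s)^{2}/(2(t-s))}}{(t-s)^{(n+1)/2}}\,H_{n}\!\left(\frac{b(s)}{\sqrt{2(t-s)}}\right) f(s\,|\,x)\,ds .
\end{equation*}

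Next, I multiply both sides by $g(x)$ and integrate over $x\in[0,\infty)$. Applying Fubini on the right and invoking the Fredholm equation \eqref{eqn:matchmain} to replace $\int_0^\infty f(s\,|\,x)\,g(x)\,dx$ by $f(s)$, the right-hand side collapses exactly to $a_n(t)$, yielding the key moment identity
\begin{equation*}
\int_0^\infty g(x)\,H_{n}\!\left(\frac{x}{\sqrt{2t}}\right) e^{-x^{2}/(2t)}\,dx \;=\; t^{(n+1)/2}\,a_{n}(t), \qquad n\in\NN .
\end{equation*}
The change of variable $u=x/\sqrt{2t}$ converts the left-hand side into the standard Hermite inner product of $\tilde g(u):=g(u\sqrt{2t})\,\mathbf{1}_{u\geq 0}$ against $H_n$ in $L^{2}(\mathbb R,e^{-u^{2}})$. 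Since $\{H_n\}$ is a complete orthogonal basis of that Hilbert space with $\|H_n\|^{2}=2^{n}\,n!\,\sqrt{\pi}$, the $L^{2}$ expansion of $\tilde g$ in this basis is uniquely determined and, once the substitution is undone, produces exactly \eqref{eqn:hermitetransform}. Uniqueness is then immediate: any two solutions $g_1,g_2\in\GG\cap L^{2}(\mathbb R,e^{-x^{2}})$ have identical Hermite moments by the moment identity, so $g_1-g_2$ vanishes in $L^{2}$.

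The main technical obstacle is the justification of Fubini. The joint integrand $\bigl|\frac{e^{-b(s)^{2}/(2(t-s))}}{(t-s)^{(n+1)/2}}\,H_n(b(s)/\sqrt{2(t-s)})\bigr|\,f(s\,|\,x)\,|g(x)|$ has a $(t-s)^{-(n+1)/2}$ singularity at $s\uparrow t$ that must be tamed by the Gaussian factor inherited from the exponential (using $b(t)>0$ and continuity of $b$ near $t$) and by the vanishing of $f(s|x)$ as $s\downarrow 0$; integrability in $x$ is then handled by either the $L^{1}$ branch or the essential-boundedness branch of the definition of $\GG$. Everything else in the argument is a routine orthogonal expansion.
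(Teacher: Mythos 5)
Your proposal is correct and follows essentially the same route as the paper's own proof: set $y=-x$ in \eqref{eqn:general1} (admissible precisely because $b(t)>0$), integrate against $g$, use Fubini and \eqref{eqn:matchmain} to identify the right side with $a_n(t)$, and invert via completeness of the Hermite basis in $L^2(\mathbb R,e^{-x^2})$. The only place you stop short is the Fubini justification, which the paper carries out in detail using the growth bound \eqref{hermiteinequality} on $H_n$, boundedness of the resulting kernel on $[0,t]$ (where $b(t)>0$ kills the $(t-s)^{-(n+1)/2}$ singularity, as you note; no condition at $s\downarrow 0$ is actually needed), and a Gaussian tail bound on $F(t|x)$ for large $x$ to handle the bounded branch of $\GG$.
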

\begin{proof} See Appendix A $\boxdot$
\end{proof}

This solution need not be a true density function. However, in the case when $g$ is a density function then it is the unique solution to the randomized FPT problem. Interesting, the above series representation holds for all $t>0$ such that $b(t)>0$, yet the solution $g(x)$ is independent of the specific choice of $t$.

Next we examine the Laplace transform of $g$ using the Fredholm equation of the first kind (\ref{eqn:matchmain}). Using (\ref{eqn:matchmain}) we obtain the following result.
\begin{prop}\label{prop:laplacetransform}
Suppose $b:[0,\infty)\mapsto\mathbb R$ is continuous and satisfies condition \eqref{fredholmcondition}. Then, if (\ref{eqn:matchmain}) has a solution $g\in \GG$, it is unique and its Laplace
transform is
\begin{eqnarray}
\tilde{g}(\alpha)=\int_0^{\infty}e^{-\alpha b(t)-t\alpha^2/2}f(t)dt \ .\label{eqn:matchlaplace}
\end{eqnarray}
\end{prop}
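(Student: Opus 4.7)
The plan is to convert the Fredholm equation \eqref{eqn:matchmain} into an identity for $\tilde g$ by applying \eqref{eqn:mainreal} pointwise in $x$ to a shifted boundary, then integrating against $g$ and swapping the order of integration.

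First, I would observe that the conditional density $f(t|x)$ is the first-passage density of a standard Brownian motion started at $0$ to the shifted boundary $\tilde b(t) := b(t) - x$, since $W_t + x \le b(t)$ if and only if $W_t \le b(t) - x$. A constant shift does not affect the asymptotic slope of the boundary, so $\tilde b$ satisfies condition \eqref{fredholmcondition} for exactly the same values of $\alpha$ as $b$. Applying \eqref{eqn:mainreal} to the pair $(\tilde b, f(\cdot|x))$ therefore gives
\begin{eqnarray*}
\int_0^\infty e^{-\alpha(b(t)-x) - \alpha^2 t/2}\, f(t|x)\, dt = 1 ,
\end{eqnarray*}
which, after extracting the factor $e^{\alpha x}$, rearranges to the pointwise identity
\begin{eqnarray*}
\int_0^\infty e^{-\alpha b(t) - \alpha^2 t/2}\, f(t|x)\, dt = e^{-\alpha x} .
\end{eqnarray*}

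Next I would multiply this identity by $g(x)$ and integrate over $x \ge 0$. The left-hand side becomes $\tilde g(\alpha)$ by definition. On the right-hand side I would invoke Fubini to swap the $x$- and $t$-integrals, after which the Fredholm equation \eqref{eqn:matchmain} collapses the inner $x$-integral to $f(t)$, delivering exactly \eqref{eqn:matchlaplace}. Uniqueness of $g$ in $\GG$ is then immediate from the injectivity of the Laplace transform on this class.

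The main technical point is the Fubini interchange. My plan there is to check absolute integrability by taking $|g(x)|\, e^{-\alpha b(t) - \alpha^2 t/2}\, f(t|x)$, integrating in $t$ first (which by the pointwise identity above yields $|g(x)|\, e^{-\alpha x}$), and then integrating in $x$; the resulting integral is finite for $\alpha > 0$ because $g \in \GG$ is either in $L^1$ or bounded on $[0,\infty)$. The hypothesis \eqref{fredholmcondition} is precisely what is needed to make \eqref{eqn:mainreal} applicable for every shift $\tilde b(t) = b(t) - x$ uniformly in $x$, so no additional restriction on $\alpha$ arises beyond what is already built into \eqref{fredholmcondition} and the existence of $\tilde g(\alpha)$.
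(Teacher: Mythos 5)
Your proposal is correct and follows essentially the same route as the paper: apply \eqref{eqn:mainreal} conditionally on $X=x$ to get $\int_0^\infty e^{-\alpha b(t)-\alpha^2 t/2}f(t|x)\,dt=e^{-\alpha x}$, multiply by $g(x)$, justify Fubini exactly as you do (integrating the absolute integrand in $t$ first to get $\int_0^\infty e^{-\alpha x}|g(x)|\,dx<\infty$ for $g\in\GG$), and conclude uniqueness from injectivity of the Laplace transform. The only quibble is cosmetic: after multiplying by $g(x)$ it is the $e^{-\alpha x}$ side of your displayed identity (its right-hand side) that becomes $\tilde g(\alpha)$, not the left.
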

\begin{proof}
Assume that equation \eqref{eqn:matchmain} has a solution $g\in \GG$. Under the condition \eqref{fredholmcondition}, equation \eqref{eqn:mainreal} holds for all $\alpha > 0$ and (conditional on $X=x$) we have
\[
\int_0^{\infty}e^{-\alpha b(t)-t\alpha^2/2}f(t|x)dt=e^{-\alpha x} \ .
\]
Multiply both sides by $g(x)$ and integrate out $x$. By Fubini's theorem, the order of integration can be exchanged since \[
\int_0^{\infty}\int_0^{\infty}|g(x)|e^{-\alpha b(t)-t\alpha^2/2}f(t|x)dtdx=\int_0^{\infty}e^{-\alpha x}|g(x)|dx<\infty\ .
\]
Finally, using (\ref{eqn:matchmain}), we obtain the Laplace transform of $g$ given in (\ref{eqn:matchlaplace}). Uniqueness then follows from the uniqueness of the Laplace transform. $\boxdot$
\end{proof}

As a demonstration of the applicability of the above result, suppose $b(t)=\sqrt{t}$ and let $f$ be the unconditional density. Then \eqref{eqn:matchlaplace} becomes
\begin{eqnarray*}
\tilde{g}(\alpha)=\int_0^{\infty}e^{-\alpha \sqrt{t}-t\alpha^2/2}f(t)dt \ ,
\end{eqnarray*}
for all $\alpha>0$. Multiplying both sides of this equation by $\alpha^{p-1},\ p>0$, and integrating out $\alpha$ we obtain
\begin{align*}
& \int_0^{\infty}\alpha^{p-1}\tilde{g}(\alpha)d\alpha  \\
& \qquad = \int_0^{\infty}\alpha^{p-1}\int_0^{\infty}e^{-\alpha \sqrt{t}-t\alpha^2/2}f(t)\ dtd\alpha\\
& \qquad = \int_0^{\infty}t^{-p/2}f(t)\int_0^{\infty}u^{p-1}e^{-u-u^2/2} \ dudt
= e^{1/4} \Gamma(p)D_{-p}(1)\int_0^{\infty}t^{-p/2}f(t) \ dt\\
& \qquad = e^{1/4}\Gamma(p)D_{-p}(1)\widehat{f}(1-p/2)\ .
\end{align*}
Here, the substitution $u=\alpha \sqrt{t}$ was used in the second equality, and $\widehat{f}$ is the {\it Mellin transform} of $f$. Thus the Mellin transform of the Laplace transform of $g$, denoted $\widehat{\widetilde{g}}$ is given by
\[
\widehat{\widetilde{g}}(p)=e^{1/4}\Gamma(p)D_{-p}(1)\hat{f}(1-p/2)\ ,
\]
provided that $\hat{f}(1-p/2)$ exists for a non-empty set of positive real values of $p$.

\section{Existence}

We saw from Propositions \ref{prop:hermitetransform} and \ref{prop:laplacetransform} that if the boundary is well behaved and there exists a solution $g\in \GG$ of \eqref{eqn:matchmain} then it is unique. Thus existence of a solution to the RFPT problem implies uniqueness if the boundary satisfies the hypothesis of Proposition \ref{prop:laplacetransform} since any density function $g$ belongs to the class $\GG$. The question of showing the existence of a unique matching density $g$ reduces to finding conditions under which (\ref{eqn:matchmain}) has a solution. Given $b(t)$, there may not exist a density $g$ for every density function $f$ satisfying (\ref{eqn:matchmain}). An illustrative counterexample of existence is provided by the choice $b(t)=0$ and $f(t)=\lambda e^{-\lambda t},\ \lambda>0$. Then \eqref{eqn:matchlaplace} reduces to $\tilde{g}(\alpha)=\tilde{f}(\alpha^2/2)=2\lambda/(2\lambda +\alpha^2)$. Consequently, $\tilde{g}$ is the Laplace transform of $g(x;\lambda)=\sqrt{2\lambda}\sin(x\sqrt{2\lambda})$. Thus clearly, $g\in G$ but it is not a probability density function.

A sufficient requirement for the existence of a density solution to \eqref{eqn:matchmain} can be constructed based on Picard's Criterion (see e.g. \citeN{PolyaninManzhirov08}, p.578-583). However, such a construction is difficult since the kernel function $f(t|x)$ is not, in general, known explicitly. The only notable exceptions being the linear, square-root and quadratic boundaries mentioned earlier. Furthermore we cannot guarantee that the solution is a density function. Consequently, we approach the question of existence in a probabilistic manner. To this end, we seek a random variable $X$ such that
\begin{eqnarray}
\mathbb{E}^{q}(f(t|X))=f(t) \ , \label{matchprob}
\end{eqnarray}
where the expectation is taken under a measure $q$ with support on the positive real line. In light of the Laplace transform given in \eqref{eqn:matchlaplace}, the question of existence is reduced to examining sufficient conditions under which this transform is a moment generating function of some random variable $X$. It is well known that moment generating functions are completely monotone\footnote{Recall that a completely monotone function $r$ has derivatives of all orders which satisfy $(-1)^n\frac{d^n}{d\alpha^n}r(\alpha)\geq 0$ for all $\alpha > 0$ and all non-negative integers $n\geq 0$.} (see \citeN{Feller71}). Thus, if the function $r:[0,\infty)\mapsto (0,\infty)$, defined as
\begin{align}
r(\alpha):=\int_0^{\infty}e^{-\alpha b(t)-\alpha^2t/2}f(t)dt \ ,
\end{align}
is completely monotone, then it is our candidate for a moment generating function. The following Lemma provides an alternative check point for checking complete monotonicity of $r(\alpha)$ and proves to be a useful tool.
\begin{lem}\label{completemonotone}
Suppose $b:[0,\infty)\mapsto\mathbb R$ is continuous, satisfies condition \eqref{fredholmcondition}, and
\begin{eqnarray}
\int_0^{\epsilon}e^{\beta \frac{b(t)}{\sqrt{t}}}f(t)dt < \infty,\ \forall \beta>0 \ .
\end{eqnarray}
Then $r(\alpha)$ is completely monotone if and only if the pair $(b,f)$ satisfies
\begin{eqnarray}
\int_0^{\infty}t^{n/2}e^{-\alpha b(t)-\alpha^2 t/2}H_n\left(\frac{b(t)+\alpha t}{\sqrt{2t}} \right)f(t)dt \geq 0 \label{monotonecheck}
\end{eqnarray}
for all $n\geq 1$ and $\alpha>0$.
\end{lem}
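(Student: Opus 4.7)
The plan is to verify complete monotonicity of $r(\alpha)$ by differentiating under the integral sign and identifying the resulting expressions, term by term in $n$, with the integral in \eqref{monotonecheck}. The crucial algebraic identity is the Rodrigues-type formula
\[
(-1)^n\frac{d^n}{d\alpha^n}e^{-\alpha b(t)-\alpha^2 t/2} = (t/2)^{n/2}\,e^{-\alpha b(t)-\alpha^2 t/2}\,H_n\!\left(\frac{b(t)+\alpha t}{\sqrt{2t}}\right),
\]
which I would derive by completing the square in $\alpha$ to write the exponent as $b(t)^2/(2t)-u^2$, where $u=(b(t)+\alpha t)/\sqrt{2t}$, and then applying the classical identity $(d/du)^n e^{-u^2}=(-1)^n e^{-u^2}H_n(u)$ together with the chain-rule factor $(du/d\alpha)^n=(t/2)^{n/2}$.

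Granted the legitimacy of interchanging $(d/d\alpha)^n$ with $\int_0^\infty\cdots\,dt$, this identity delivers
\[
(-1)^n r^{(n)}(\alpha)=2^{-n/2}\int_0^\infty t^{n/2}e^{-\alpha b(t)-\alpha^2 t/2}\,H_n\!\left(\frac{b(t)+\alpha t}{\sqrt{2t}}\right)f(t)\,dt
\]
for every $\alpha>0$ and every $n\ge 1$. Since the case $n=0$ is automatic (both $f$ and the exponential are non-negative), complete monotonicity of $r$ is then equivalent to the non-negativity of the displayed integral for all $n\ge 1$ and $\alpha>0$, which is exactly condition \eqref{monotonecheck}, giving both directions of the biconditional at once.

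The step I expect to be the main obstacle is justifying the interchange of derivative and integral, for which I would supply a uniform integrable majorant on every compact sub-interval $[\alpha_1,\alpha_2]\subset(0,\infty)$. Using the polynomial bound $|H_n(x)|\le c_n(1+|x|)^n$, I would split the estimate into two regimes. For $t$ large, condition \eqref{fredholmcondition} provides a linear lower bound on $b(t)$, so the Gaussian factor $e^{-\alpha_1^2 t/2}$ dominates any polynomial in $t$ and in $|b(t)+\alpha t|/\sqrt{t}$, yielding an integrable tail. For $t$ small, the argument of the Hermite polynomial can grow like $b(t)/\sqrt{t}$; here I would invoke the elementary bound $x^n\le n!\,\beta^{-n}e^{\beta x}$ (with an appropriate sign-split of $b(t)/\sqrt{t}$ that uses continuity of $b$ near $0$ to treat the negative part trivially) to reduce the small-$t$ estimate to the integrability of $e^{\beta b(t)/\sqrt{t}}f(t)$ near $0$ for arbitrary $\beta>0$, which is exactly the hypothesis of the lemma. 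Combining the two regimes produces the required dominating function, and dominated convergence then closes the argument.
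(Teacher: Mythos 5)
Your proposal is correct and follows essentially the same route as the paper's proof: both hinge on the identity $(-1)^n\frac{d^n}{d\alpha^n}e^{-\alpha b(t)-\alpha^2 t/2}=(t/2)^{n/2}e^{-\alpha b(t)-\alpha^2 t/2}H_n\bigl((b(t)+\alpha t)/\sqrt{2t}\bigr)$, justify differentiation under the integral by a dominating bound split between small $t$ (where the hypothesis $\int_0^\epsilon e^{\beta b(t)/\sqrt{t}}f(t)\,dt<\infty$ is invoked) and large $t$ (where condition \eqref{fredholmcondition} forces the exponent to be dominated by $-\alpha^2 t/2$), and then read off the equivalence with \eqref{monotonecheck}. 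The only difference is cosmetic: you bound $H_n$ by a degree-$n$ polynomial plus $x^n\le n!\,\beta^{-n}e^{\beta x}$, whereas the paper uses the exponential Hermite bound \eqref{hermiteinequality} directly.
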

\begin{proof} See Appendix A $\boxdot$
\end{proof}

Using the complete monotonicity property of $r(\alpha)$, we can now derive sufficient conditions for the existence of $X$. The following Theorem describes the properties of the functions $b$ and $f$ which guarantee the existence of $X$.
\begin{theorem} \label{existence}
Suppose that
\begin{enumerate}
 \item $b(t):[0,\infty)\mapsto \mathbb R$ is continuous and satisfies condition (\ref{fredholmcondition})
 \item $r(\alpha)$ is completely monotone
 \item If
 \begin{align}
 \int_0^{\infty}e^{-\alpha b(t)-\alpha^2t/2}z(t)dt=0 \label{trivial}
 \end{align}
holds for all $\alpha> 0$, then $z(t)$ is identically zero
 \end{enumerate}
Then, there exists a random variable $X$ with m.g.f. given by $r(\alpha)$ such that $\tau_X$ has probability density function $f$.
\end{theorem}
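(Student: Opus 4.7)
The plan is to construct the random variable $X$ via Bernstein's theorem and then verify that the density of $\tau_X$ matches $f$ by exploiting the Fredholm identity \eqref{eqn:mainreal}, with condition (3) playing the role of an injectivity statement that identifies the two densities.

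First I would observe that $r(\alpha)$ satisfies all the hypotheses of Bernstein's theorem on completely monotone functions: by assumption (2) it is completely monotone on $(0,\infty)$, and by monotone convergence
\begin{eqnarray*}
\lim_{\alpha \downarrow 0} r(\alpha) = \int_0^\infty f(t)\,dt = 1,
\end{eqnarray*}
since $f$ is a probability density. Bernstein's theorem then produces a non-negative random variable $X$ with distribution $\nu$ supported on $[0,\infty)$ such that $r(\alpha) = \mathbb{E}[e^{-\alpha X}]$ for every $\alpha > 0$. This $X$ is our candidate for solving the RFPT problem.

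Next I would show that the unconditional density of $\tau_X$ is precisely $f$. Let $\tilde f(t) := \mathbb{E}[f(t|X)] = \int_0^\infty f(t|x)\,d\nu(x)$ denote this unconditional density. The key tool is the Fredholm identity \eqref{eqn:mainreal} applied conditionally, which (under condition \eqref{fredholmcondition} guaranteed by hypothesis (1)) gives
\begin{eqnarray*}
\int_0^\infty e^{-\alpha b(t) - \alpha^2 t/2} f(t|x)\,dt = e^{-\alpha x}, \qquad \alpha > 0.
\end{eqnarray*}
Integrating both sides against $\nu(dx)$ and exchanging the order of integration by Tonelli (all integrands are non-negative), I obtain
\begin{eqnarray*}
\int_0^\infty e^{-\alpha b(t) - \alpha^2 t/2} \tilde f(t)\,dt = \mathbb{E}[e^{-\alpha X}] = r(\alpha) = \int_0^\infty e^{-\alpha b(t) - \alpha^2 t/2} f(t)\,dt.
\end{eqnarray*}
Subtracting, the function $z(t) := \tilde f(t) - f(t)$ satisfies the vanishing integral identity \eqref{trivial} for every $\alpha > 0$.

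Finally, hypothesis (3) forces $z \equiv 0$, i.e.\ $\tilde f = f$ almost everywhere, which is the desired conclusion that $\tau_X$ has density $f$. The argument is therefore a short chain: Bernstein produces $X$, the Fredholm identity transfers the Laplace characterization of $X$ into one of the density $\tilde f$ of $\tau_X$, and condition (3) provides the uniqueness step. The only substantive work lies in checking the hypotheses of Bernstein (easy) and in justifying the exchange of integrals (handled by Tonelli thanks to positivity); the heart of the result is really the structural observation that the specific transform appearing in $r$ matches the one appearing in \eqref{eqn:mainreal}, so that the moment generating function produced by Bernstein automatically has the right interpretation.
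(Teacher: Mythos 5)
Your argument is essentially the paper's own proof: Bernstein's theorem produces the law of $X$ from the completely monotone $r$, the conditional Fredholm identity \eqref{eqn:mainreal} together with Tonelli/Fubini turns $r(\alpha)=\mathbb{E}\left[e^{-\alpha X}\right]$ into the vanishing-integral condition for $z=\tilde f-f$, and hypothesis (3) forces $z\equiv 0$. The only small difference is that the paper secures total mass one for the Bernstein measure by integrating the resulting identity $\int_0^{\infty}f(t|x)\,dq(x)=f(t)$ over $t$, rather than via your claim $\lim_{\alpha\downarrow 0}r(\alpha)=1$ by monotone convergence, which is the safer route since the integrand need not be monotone in $\alpha$ when $b$ takes negative values.
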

\begin{proof} Since $r$ is completely monotone and $r(0)=1$, by Bernstein's theorem (see \citeN{Feller71} pp. 439), there exists a probability measure on $[0,\infty)$ with cumulative distribution function $q$ such that
\[
r(\alpha)=\int_0^{\infty}e^{-x\alpha}dq(x) \ .
\]
On the other hand since equation (\ref{eqn:mainreal}) holds for all $\alpha > 0$ then we have
\[
\int_0^{\infty}e^{-\alpha b(t)-\alpha^2t/2}f(t|x)dt=e^{-\alpha x}
\]
for all $\alpha> 0$. Taking integrals on both sides of the above equation with respect to the function $q$ and using Fubini's theorem we obtain
\begin{eqnarray*}
r(\alpha)=\int_0^{\infty} e^{-\alpha x}dq(x)&=&\int_0^{\infty}\int_0^{\infty}e^{-\alpha b(t)-\alpha^2t/2}f(t|x)dtdq(x)\\
&=&\int_0^{\infty}e^{-\alpha b(t)-\alpha^2t/2}\int_0^{\infty}f(t|x)dq(x)dt \ .
\end{eqnarray*}
The above relation implies that
\begin{eqnarray}
\int_0^{\infty}e^{-\alpha b(t)-\alpha^2t/2}\left(\int_0^{\infty}f(t|x)dq(x)-f(t)\right)dt=0
\ . \label{exist2}
\end{eqnarray}
Since \eqref{exist2} holds for all $\alpha > 0$, assumption 3 implies that
\[
\int_0^{\infty}f(t|x)dq(x)=f(t)\ .
\]
Furthermore, integrating  the above w.r.t. $t$ on $[0,\infty)$ we have that $\int_0^{\infty}dq(x)=1$ since $f$ is a proper density function. Therefore $q$ defines a proper distribution function. Consequently, the equality $\mathbb{E}^q(f(t|X))=f(t)$, where the distribution of $X$ is given by $q$, holds and there exists a solution to the RFPT problem (\ref{matchprob}). $\boxdot$
\end{proof}

The hypothesis of Theorem \ref{existence} imply the assumptions of Proposition \ref{prop:laplacetransform} and therefore existence implies uniqueness. The assumption that \eqref{trivial} has only the trivial solution is certainly satisfied in the case $b(t)=\mu t,\ \mu>0$ because, in this case, \eqref{trivial} implies that the Laplace transform of $z(t)$ is zero and thus $z(t)$ is identically zero. The assumption is also satisfied when $b(t)=\mu \sqrt{t}$. In this case, applying the Mellin transform on both sides of \eqref{trivial}, implies that the Mellin transform of $z$ is zero and thus $z$ is also zero. These two examples demonstrate that the class of boundaries for which \eqref{trivial} has only the trivial solution is non-empty. The complete monotonicity of the function $r$ is harder to check in general. However, the case $b(t)=\mu t$ and $f(t)$ is the Gamma$(a,b)$ density can be checked directly using the conditions of Lemma \ref{completemonotone}.
\begin{cor}\label{linboundexist}
Let $b(t)=\mu t,\ \mu>0,$ and $f(t)=\frac{t^{b-1}e^{-t/a}}{\Gamma(b)(a)^b}$ with $a\geq 2/\mu^2$. Then there exists a unique random variable $X$ such that $\tau_X$ has the probability density $f$.
\end{cor}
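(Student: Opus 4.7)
The plan is to verify the three hypotheses of Theorem \ref{existence} for the specific pair $b(t)=\mu t$, $f(t)=\frac{t^{k-1}e^{-t/a}}{\Gamma(k)a^k}$ (writing $k$ for the gamma shape parameter to avoid colliding with the paper's notation for the boundary). Condition 1 is immediate, since $b(t)+\alpha t=(\mu+\alpha)t\to\infty$ for every $\alpha>0$. Condition 3, as pointed out in the remarks following the theorem, reduces to uniqueness of the Laplace transform: as $\alpha$ ranges over $(0,\infty)$, the exponent $\alpha\mu+\alpha^2/2$ also sweeps $(0,\infty)$, so \eqref{trivial} forces the Laplace transform of $z$ to vanish identically. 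The integrability hypothesis of Lemma \ref{completemonotone} is trivial because $b(t)/\sqrt{t}=\mu\sqrt{t}$ is bounded on compact intervals and the gamma density is locally integrable.

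All substance lies in condition 2, the complete monotonicity of $r$. By Lemma \ref{completemonotone} this reduces to showing, for each $n\ge 1$ and $\alpha>0$, that
\[
\int_0^\infty t^{n/2+k-1}\,e^{-(\alpha\mu+\alpha^2/2+1/a)\,t}\,H_n\!\left((\mu+\alpha)\sqrt{t/2}\right)dt \;\ge\; 0.
\]
I would apply the substitution $s=(\mu+\alpha)\sqrt{t/2}$ to recast the integral as a positive multiple of
\[
K_n \;:=\; \int_0^\infty s^{n+2k-1}\,e^{-\rho^2 s^2}\,H_n(s)\,ds,\qquad \rho^2=\frac{2\mu\alpha+\alpha^2+2/a}{(\mu+\alpha)^2}.
\]
The hypothesis $a\ge 2/\mu^2$ is exactly $\rho^2\le 1$, equivalently $\eta:=1-\rho^2\ge 0$; this is precisely where the parameter restriction enters.

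To establish $K_n\ge 0$, split $e^{-\rho^2 s^2}=e^{\eta s^2}e^{-s^2}$ and use the Rodrigues-type identity $H_n(s)e^{-s^2}=(-1)^n\frac{d^n}{ds^n}e^{-s^2}$ to rewrite $K_n=(-1)^n\int_0^\infty s^{n+2k-1}e^{\eta s^2}\frac{d^n}{ds^n}e^{-s^2}\,ds$, and integrate by parts $n$ times. The boundary terms at $s=0$ vanish because the lowest power of $s$ surviving in the Leibniz expansion of the first factor is $s^{n+2k-j}\ge s^{2k}$ with $k>0$, and those at $s=\infty$ vanish because $\rho^2>0$ produces net exponential decay. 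The sign factor $(-1)^n$ cancels against the $(-1)^n$ arising from the $n$ parts-integrations, yielding
\[
K_n \;=\; \int_0^\infty \frac{d^n}{ds^n}\!\left[s^{n+2k-1}e^{\eta s^2}\right] e^{-s^2}\,ds.
\]

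The final ingredient is a pointwise positivity check: applying Leibniz to $\frac{d^n}{ds^n}[s^{n+2k-1}e^{\eta s^2}]$, the derivatives $\frac{d^{n-j}}{ds^{n-j}}s^{n+2k-1}=\frac{\Gamma(n+2k)}{\Gamma(2k+j)}s^{2k+j-1}$ have strictly positive coefficients (using $k>0$), while $\frac{d^j}{ds^j}e^{\eta s^2}=P_j(s)e^{\eta s^2}$ where the polynomials $P_j$ have non-negative coefficients by an easy induction on the recursion $P_{j+1}=P_j'+2\eta s P_j$ (both operations preserve non-negativity since $\eta\ge 0$). Hence the integrand of $K_n$ is non-negative on $[0,\infty)$, giving $K_n\ge 0$. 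This completes the verification of condition 2; existence follows from Theorem \ref{existence}, and uniqueness from Proposition \ref{prop:laplacetransform}, whose hypotheses are implied. The main obstacle is the bookkeeping in the substitution and the verification that all boundary terms vanish; once $a\ge 2/\mu^2$ is translated into $\eta\ge 0$, the pointwise positivity argument is the heart of the proof.
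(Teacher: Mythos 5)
Your proof is correct, and it takes a genuinely different route from the paper's. Both reduce the problem, via Lemma \ref{completemonotone}, to the nonnegativity of
\begin{equation*}
K_n=\int_0^\infty s^{n+2k-1}e^{-\rho^2 s^2}H_n(s)\,ds,\qquad \rho^2=\frac{2\mu\alpha+\alpha^2+2/a}{(\mu+\alpha)^2},
\end{equation*}
and both observe that $a\ge 2/\mu^2$ is exactly $\rho^2\le 1$. The divergence is in how this inequality is established. The paper looks up the integral in Prudnikov (2.20.3(4)), obtaining a closed form involving a ${}_2F_1$ evaluated at $\frac{\mu^2-2/a}{(\mu+\alpha)^2}$, and then argues nonnegativity from the positivity of the hypergeometric series when its argument lies in $[0,1)$. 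You instead use the Rodrigues identity $H_n(s)e^{-s^2}=(-1)^n\frac{d^n}{ds^n}e^{-s^2}$, integrate by parts $n$ times to flip all derivatives onto $s^{n+2k-1}e^{\eta s^2}$ (with $\eta=1-\rho^2\ge 0$), check the boundary terms vanish, and conclude by showing that $\frac{d^n}{ds^n}[s^{n+2k-1}e^{\eta s^2}]$ has nonnegative coefficients via the recursion $P_{j+1}=P_j'+2\eta s P_j$. Your approach is self-contained and elementary — no table lookup, no special-function positivity lemma — and cleanly isolates where the hypothesis $a\ge 2/\mu^2$ enters as $\eta\ge 0$. The paper's approach is terser and produces an explicit closed form for $K_n$, which could be useful for further quantitative analysis. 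One tiny typo in your write-up: the surviving power in the boundary term at step $j$ is $s^{n+2k-1-j}\ge s^{2k}$, not $s^{n+2k-j}$, but since $k>0$ the conclusion is unchanged.
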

\begin{proof}
Clearly, $b(t)$ satisfies the hypothesis of Proposition \ref{prop:laplacetransform} which shows the uniqueness part. For proof of existence see Appendix A. $\boxdot$
\end{proof}
Corollary \ref{linboundexist} motivates the next section where we take a closer look at the linear boundary case.

\begin{figure}[t!]
\begin{center}
\includegraphics[scale=0.5]{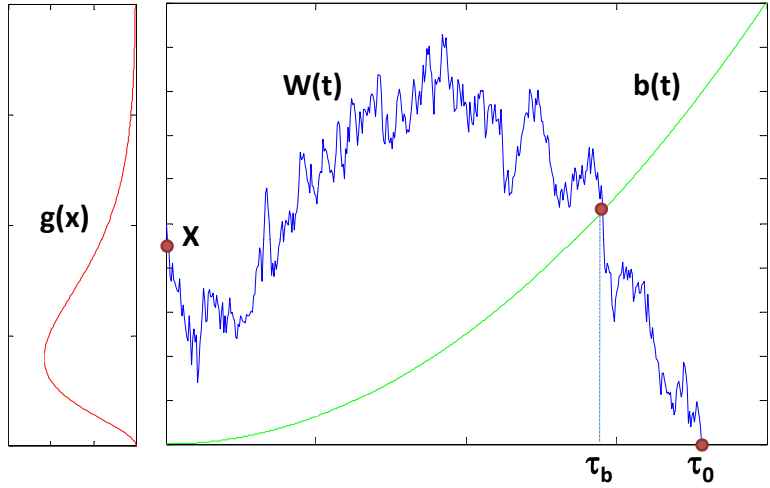}
\caption{ The probabilistic interpretation of equation \eqref{eqn:unconditionaly0v0}. The Brownian path must first hit the boundary $b$ before hitting zero. \label{fig:zeroHitting}}
\end{center}
\end{figure}
We end this section by showing that if there exists a unique solution $g$ to the RFPT problem for a given boundary $b(t)>0,\ \forall t>0,$ and target density $f_b(t)$, then $g$ is also a solution for the zero boundary with a particular target density $f_0$. To show this relationship, consider equation \eqref{eqn:general1}. When $n=1$ (and $y=-x$) and using $H_1(x)=2x$,  \eqref{eqn:general1} reduces to
\begin{eqnarray}
\int_0^t \frac{e^{-\frac{b(s)^2}{2(t-s)}}b(s)}{\sqrt{2 \pi}(t-s)^{3/2}}\ f(s|x)ds=\frac{e^{-x^2/2t} \ x}{\sqrt{2 \pi}t^{3/2}} \ .\label{eqn:generaly0v0}
\end{eqnarray}
The right side can be recognized as the probability density of the FPT of $x+W_t$ to the zero boundary. This observation admits a simple probabilistic interpretation of (\ref{eqn:generaly0v0}): the process $x+W_t$ first hits $b$ at time $\tau_b$ before it hits the zero boundary at time $\tau_0$ -- see Figure \ref{fig:zeroHitting}. Thus, if there exists a $g\in \GG$ which solves (\ref{eqn:matchmain}) for the boundary $b$ and unconditional density $f_b$, then
\begin{eqnarray}
\int_0^t \frac{e^{-\frac{b(s)^2}{2(t-s)}}b(s)}{\sqrt{2\pi}(t-s)^{3/2}}f_b(s)ds=f_0(t) \ , \label{eqn:unconditionaly0v0}
\end{eqnarray}
where $f_b(s)=\int_0^{\infty}f(s|x)g(x)dx$ and $f_0(t)=\int_0^{\infty}\frac{e^{-x^2/2t}x}{\sqrt{2\pi}t^{3/2}}g(x)dx$. If $b(t)>0$ for all $t>0$ then (\ref{eqn:generaly0v0}) and (\ref{eqn:unconditionaly0v0}) hold for all $t>0$. In fact  $f_0(t)$ is a proper density function, as can be seen by integrating the left side of (\ref{eqn:unconditionaly0v0}). Therefore,  $g \in GG$ is furthermore a density function, the corresponding unconditional distributions of the first passage times to $b(s)$ and to $0$ are related as in (\ref{eqn:unconditionaly0v0}). Moreover, \eqref{eqn:unconditionaly0v0} implies that for every distribution $f_b$, for which there is a matching distribution $g$, there exists a distribution $f_0$ (given by the integral in \eqref{eqn:unconditionaly0v0}) such that the pair $(f_0,b=0)$ has the same matching distribution $g$. As a result, the class of unconditional densities for the boundary $b(t)=0$ for which there exists a matching distribution is at least as large as the corresponding class of unconditional distributions for any boundary $b(t)>0$.

\section{Linear Boundary}

We now focus on the case when the boundary is linear and seek explicit solutions to the RFPT. When the starting position $X$ is non-random then the first passage time distribution of the Brownian motion to the linear boundary $bt-X$ is well known to be inverse Gaussian and is explicilty
\begin{eqnarray}
f_{\tau|X}(t)=\frac{X}{\sqrt{2 \pi t^3}}\exp\left\{-\frac{(bt-X)^2}{2t}\right\} \ . \label{inversegauss}
\end{eqnarray}
\citeN{JacksonKreininZhang2008} use this explicit form to demonstrate that the hitting time of a drifted Brownian motion with a random starting point can be Gamma distributed. In this section we corroborate this result based on our integral equation \eqref{eqn:matchlaplace} and extend it to a class of distributions which are infinite linear combinations of Gamma distributions.

Letting $b(t)=\mu t,\ \mu>0$ and denoting
\begin{eqnarray*}
\tau_{x,\mu}=\inf\{t>0;W_t\leq \mu t-x\} \ ,
\end{eqnarray*}
then, for $\alpha>-\mu$ (\ref{eqn:matchlaplace}) reduces to
\begin{eqnarray}
\tilde{g}(\alpha)=\int_0^{\infty}e^{-t(\alpha \mu+\alpha^2/2)}f(t)dt= \tilde{f}(\alpha \mu+\alpha^2/2) \ , \label{eqn:matchlinearb>0}
\end{eqnarray}
where $\tilde{f}$ is the Laplace transform of $f$, the distribution of the randomized stopping time $\tau_{X,\mu}$. When $f$ is the density of the Gamma distribution then we can factorize $\tilde{f}(\alpha \mu+\alpha^2/2)$ and write it as a product of two Laplace transforms of Gamma densities. As a consequence, $g$ is a convolution of Gamma distributions. The same argument applies when $f$ is a mixture of Gamma distributions. More formally, define the sequence $a_{n}$ such that $\inf_{n\geq 1}a_n\geq 2/\mu^2$ and the sequence $b_n>0,\ \forall n\in \NN$. Define the class of densities,
\begin{eqnarray}\label{linearclass}
\CC:=\left\{f \ : \ f(t)=\sum_{n=1}^{\infty}p_n f_{a_n,b_n}(t),\ \sum_{n=1}^{\infty}p_n=1,\ p_n\geq 0,\ \inf_{n\geq 1}a_n\geq 2/\mu^2\right\}\ ,
\end{eqnarray}
where $f_{a_n,b_n}$ are densities of Gamma distributions with scale parameters $a_n$ and shape parameters $b_n$. Then we have the following result:
\begin{thm}\label{mdlinbound}
Let $b(t)=\mu t,\ \mu> 0$. Suppose $\tau_{X,\mu}\sim f\in \CC$ . Then, the matching density $g(x)$ is given by
\begin{eqnarray}\label{gamma}
g(x)=\sum_1^{\infty} p_n\frac{\sqrt{2 \pi}e^{-\mu x}}{\Gamma(b_n)\sqrt{a_n}}\left(\frac{x}{a_n\sqrt{\mu^2-2/a_n}}\right)^{b_n-1/2}I_{b_n-1/2}(x\sqrt{\mu^2-2/a_n})  \ . \label{eqn:matchlinearmain}
\end{eqnarray}
\end{thm}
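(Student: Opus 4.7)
The plan is to prove the theorem by combining Proposition \ref{prop:laplacetransform} with an explicit Laplace-transform inversion identity for modified Bessel functions of the first kind. First, I would exploit linearity: equation \eqref{eqn:matchmain} is linear in both $f$ and $g$, so once the formula is established for a single Gamma component $f_{a,b}$ with the claimed $g_{a,b}$, the mixture case follows by summing termwise. Tonelli's theorem justifies the interchange of summation and integration because each $g_n$ is a non-negative density (as will be clear from the explicit formula and from Corollary \ref{linboundexist}), and $\sum_n p_n = 1$ ensures the resulting sum is again a probability density.

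Second, I would apply equation \eqref{eqn:matchlinearb>0} to a single Gamma$(a,b)$ component. Since the Laplace transform of $f_{a,b}$ is $\tilde f_{a,b}(s) = (1+as)^{-b}$, one gets
$$\tilde{g}(\alpha) = \bigl(1 + a\mu\alpha + \tfrac{a}{2}\alpha^2\bigr)^{-b}.$$
Completing the square in $\alpha$ with $\lambda := \sqrt{\mu^2 - 2/a}$ (which is real and non-negative precisely because $a \geq 2/\mu^2$), this becomes
$$\tilde{g}(\alpha) = \left(\frac{2}{a}\right)^{b} \bigl((\alpha+\mu)^2 - \lambda^2\bigr)^{-b}.$$
This is the key algebraic manipulation: the parameter constraint enters exactly here, ensuring that $\lambda$ is real so that the modified Bessel function $I_{b-1/2}$ (rather than the ordinary $J_{b-1/2}$) will appear.

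Third, I would invert the Laplace transform using the classical identity
$$\int_0^\infty e^{-pt}\, t^{\nu}\, I_{\nu}(\lambda t)\,dt = \frac{(2\lambda)^{\nu}\,\Gamma(\nu+1/2)}{\sqrt{\pi}\,(p^2-\lambda^2)^{\nu+1/2}},$$
valid for $p > \lambda \geq 0$ and $\nu > -1/2$. Choosing $\nu = b - 1/2$ (so $\nu > -1/2$ for every $b > 0$) and $p = \alpha + \mu$ (so $p > \lambda$ for every $\alpha > 0$, since $\lambda < \mu$), the right-hand side becomes exactly $\bigl((\alpha+\mu)^2-\lambda^2\bigr)^{-b}$ up to an explicit constant. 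Reading off the integrand produces
$$g_{a,b}(x) = \left(\frac{2}{a}\right)^{b} \frac{\sqrt{\pi}}{(2\lambda)^{b-1/2}\,\Gamma(b)}\, e^{-\mu x}\, x^{b-1/2}\, I_{b-1/2}(\lambda x),$$
and collecting the constants yields the coefficient $\sqrt{2\pi}/(\Gamma(b)\sqrt{a}\,(a\lambda)^{b-1/2})$ as stated.

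The main obstacle is essentially bookkeeping: identifying the correct Laplace-inversion identity and verifying that the regularity condition $b > 0$ together with the existence condition $a \geq 2/\mu^2$ both align exactly with the hypotheses of that identity. Existence of a probability law matching the Laplace transform was already established in Corollary \ref{linboundexist} for each component, and uniqueness is guaranteed by Proposition \ref{prop:laplacetransform}, so these do not need to be re-proved. The only remaining care is justifying the termwise Laplace inversion for the countable mixture, which is immediate from the positivity of each $g_n$ and monotone convergence.
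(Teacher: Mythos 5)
Your proposal is correct, and it reaches \eqref{eqn:matchlinearmain} by a genuinely different inversion step than the paper, although both arguments launch from the same place, namely the Laplace-transform relation \eqref{eqn:matchlinearb>0}. The paper factorizes the quadratic: writing $1+a_n(\mu\alpha+\alpha^2/2) = (1+c_n^+\alpha)(1+c_n^-\alpha)$ with $c_n^{\pm}=\tfrac{1}{2}a_n\bigl(\mu\pm\sqrt{\mu^2-2/a_n}\bigr)$, it recognizes $\tilde g$ as a product of two Gamma Laplace transforms, so that $X$ (componentwise) is a sum of two independent Gamma random variables and $g$ is their convolution, which is then evaluated in closed form as the Bessel expression. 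You instead complete the square, $1+a\mu\alpha+\tfrac{a}{2}\alpha^2=\tfrac{a}{2}\bigl((\alpha+\mu)^2-\lambda^2\bigr)$ with $\lambda=\sqrt{\mu^2-2/a}$, and invert directly via the classical pair $\int_0^\infty e^{-px}x^{\nu}I_{\nu}(\lambda x)\,dx = (2\lambda)^{\nu}\Gamma(\nu+\tfrac12)\big/\bigl(\sqrt{\pi}\,(p^2-\lambda^2)^{\nu+1/2}\bigr)$ with $\nu=b-\tfrac12$, $p=\alpha+\mu$; I checked your constants and they collapse to exactly the coefficient $\sqrt{2\pi}\big/\bigl(\Gamma(b)\sqrt{a}\,(a\lambda)^{b-1/2}\bigr)$ in \eqref{gamma}, and the hypotheses $p>\lambda\geq 0$, $\nu>-\tfrac12$ are indeed guaranteed by $\alpha>0$, $b>0$ and $a\geq 2/\mu^2$. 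The restriction $\inf_n a_n\geq 2/\mu^2$ plays the same role in both proofs (real $\lambda$ for you, real positive roots $c_n^{\pm}$ for the paper), and your termwise treatment of the infinite mixture by positivity/Tonelli matches the paper's Fubini step. What each route buys: the paper's factorization yields a probabilistic interpretation (the matching law is a mixture of Gamma-convolutions, which makes it evident without computation that $g$ is a bona fide density integrating to one), while your table inversion is shorter bookkeeping and actually supplies the closed-form evaluation that the paper's convolution step quietly relies on; also, as you note, you do not really need Corollary \ref{linboundexist}, since exhibiting an explicit nonnegative integrand whose Laplace transform equals $\tilde g$ already settles existence for this case, with uniqueness from Proposition \ref{prop:laplacetransform}. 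One shared caveat, not a gap on your part: at the boundary case $a_n=2/\mu^2$ one has $\lambda=0$ and both your identity and the stated formula \eqref{gamma} must be read in the limiting sense (recovering the Gamma$(1/\mu,2b_n)$ density of Example 1), exactly as the paper's convolution degenerates to $c_n^+=c_n^-$ there.
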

\begin{proof}
First we show that (\ref{eqn:matchlinearmain}) holds for a finite mixture of Gamma densitites. Let $f(t)=\sum_{n=1}^N p_nf_{a_n,b_n}(t), \ \sum_{n=1}^N p_n=1,\ p_n\geq 0$, where $f_{a_n,b_n}$ are Gamma densities with scale parameter $a_n$ and shape parameter $b_n$. Then, from (\ref{eqn:matchlinearb>0}), we have
\begin{align}
\tilde{g}(\alpha)=\tilde{f}(\mu\alpha + \alpha^2/2)&=\sum_{n=1}^{N} p_n (1+a_n(\mu\alpha +\alpha^2/2))^{-b_n}\nonumber \\
&=\sum_{n=1}^{N} p_n (1+c^+_n\alpha)^{-b_n}(1+c_n^-\alpha)^{-b_n} \label{eqn:proofGammaFac}
\end{align}
where $c_n^{\pm}=\frac{1}{2}a_n(\mu \pm \sqrt{b^2-2/a_n})$. For $\Re(c_n^{\pm})>0$ and $\Im(c_n^{\pm})=0$, i.e. $c_n^{\pm}$ to be positive real numbers, we require that $a_n\geq 2/\mu^2, \ n=1,...,N$. This restriction is already enforced by having $f\in\CC$. From \eqref{eqn:proofGammaFac}, $g(x)$ is a mixture of convolutions of Gamma r.v.'s and in particular
\begin{eqnarray*}
g(x)&=&\sum_1^N p_n \int_0^x f_{c_n^+,b_n}(u)f_{c_n^-,b_n}(x-u)du\\
&=& \sum_1^N p_n\frac{\sqrt{2 \pi}e^{-\mu x}}{\Gamma(b_n)\sqrt{a_n}}\left(\frac{x}{a_n\sqrt{\mu^2-2/a_n}}\right)^{b_n-1/2}I_{b_n-1/2}(x\sqrt{\mu^2-2/a_n}) \ ,
\end{eqnarray*}
where $I$ is the modified Bessel function of the first kind.

For an infinite mixture of gamma distributions the result now follows easily. For $\tau_{X,\mu}\sim f\in \CC$, substitute $f$ in (\ref{eqn:matchlaplace}) and using Fubini's theorem we can exchange the integration and summation (since all quantities are positive) to obtain the above result. The condition $\inf_{n\geq 1}a_n\geq 2/\mu^2$ ensures that each Laplace transform in the infinite mixture is factorizable with real valued roots. $\boxdot$
\end{proof}

Next we look at several simple examples for a finite and infinite Gamma density mixture.
\begin{itemize}

\item \textbf{Example 1:} For $N=1$ and $a_1=2/\mu^2$ we have $c_1^{\pm}=1/\mu$ and
\[
    \tilde{g}(\alpha)=(1+(1/\mu)(\mu\alpha +\alpha^2/2))^{-b_1}=(1+\alpha/\mu)^{-2b_1} \ .
\]
Thus $g$ is the density of a Gamma$(1/\mu,2b_1)$ distribution.

\item \textbf{Example 2:} For $N=1$, $a_1=2/\mu^2$ and $b_1=1/2$ then $g(x)=\mu e^{-\mu x}$, the density of an exponentially distributed random variable with $\mu$.

\item \textbf{Example 3:} For $b_n=1$ and $a_n\geq 2/\mu^2,\ n\leq N$, using the equality $I_{1/2}(u)=2\sinh(u)/\sqrt{2\pi u}$, we obtain
\[
g(x)=2e^{-\mu x}\sum_1^N\frac{p_n\sinh(x\sqrt{\mu^2-2/a_n})}{a_n\sqrt{\mu^2-2/a_n}}\ .
\]

\item \textbf{Example 4:} For $\mu>1$, take $a_1=2,\ b_1=k/2$ so that $\tau_{X,\mu} \sim \chi^2(k)$ and $g$ is given by
\[
g(x)=\frac{\sqrt{\pi}e^{-\mu x}}{\Gamma(k/2)}\left(\frac{x}{2\sqrt{\mu^2-1}}\right)^{(k-1)/2}I_{(k-1)/2}(x\sqrt{\mu^2-1}) \ .
\]
\end{itemize}

While direct inversion of (\ref{eqn:matchlinearb>0}) could be complicated for a general density $f$, Theorem \ref{mdlinbound} provides a procedural approach for computing densities from the class $\CC$ and their corresponding matching densities given by (\ref{eqn:matchlinearmain})  by simply choosing the sequences $a_n,\ b_n,\ p_n$. Due to the restriction on the scale parameters (as in the class $\CC$) we could set a fixed scale parameter $a_n=1/c,\ c\leq \mu^2/2$ and choose a set of shape parameters $b_n$ and weights $p_n$ to match a particular density. In such cases, the unconditional density function $f$ becomes
\begin{eqnarray}
f(t)=ce^{-ct}\sum_1^{\infty}\frac{p_n}{\Gamma(b_n)}(ct)^{b_n-1}\ . \label{f:scaleconstant}
\end{eqnarray}
This class of densities includes the non-central $\chi^2(m)$ distribution by choosing $b_n=m/2+n,\ c=1/2,\ p_n=\frac{e^{-\delta^2/2}(\delta^2/2)^{n-1}}{(n-1)!}$ where $\delta$ is the non-central parameter. Some more general examples of distributions of the form (\ref{f:scaleconstant}) are given below.

\begin{itemize}
\item \textbf{Example 5:} $p_n=e^{-a}\frac{a^{n-1}}{(n-1)!},\ b_n=v+n,\ a_n=1/c$. Then $f$ and $g$ are given by:
\begin{align*}
f(t)&= \frac{c^{v/2+1}t^{v/2}e^{-ct-a}}{a^{v/2}}I_v(2\sqrt{act}) \\
&=\frac{ce^{-ct-a}}{a^v}\sum_{k=0}^{\infty}\frac{(act)^{v+k}}{k!\Gamma(v+k+1)}\ , \\
g(x)&=\sqrt{2\pi}c^{v+1}\left(\frac{x}{\sqrt{\mu^2-2c}}\right)^{v+1/2}\!\!\!e^{-\mu x-a}\sum_0^{\infty}\left(\frac{x c a}{\sqrt{\mu^2-2c}} \right)^k\frac{I_{v+k+1/2}(x\sqrt{\mu^2-2c})}{k! \Gamma(v+k-1)}\ .
\end{align*}
When $v=0,\ a=\frac{\alpha^2}{2\beta^2},\ c=\frac{1}{2\beta^2}$ then $f(t)=\frac{e^{-(t+\alpha^2)/(2\beta^2)}}{2\beta^2}I_0(\frac{\alpha\sqrt{t}}{\beta^2})$, so that if $\tau \sim f$ then $\sqrt{\tau}$ has Rice distribution with parameters $(\alpha,\beta)$.

\item \textbf{Example 6:} Suppose $b_n=n$ and $p_{n}=\frac{(\alpha_1)_{n-1}...(\alpha_r)_{n-1}}{(\beta_1)_{n-1}...(\beta_q)_{n-1}}/K$ where $(x)_{n}$ is the Pochhammer symbol (see (\ref{pochhammer}) in Appendix). Furthermore, we assume that the real valued sequences $\{\alpha_i\}_{i=1,...,r}$ and $\{\beta_j\}_{j=1,...,q}$ are such that $p_n>0$ for all $n\geq 1$ and $\sum_1^{\infty}p_n=K<\infty$. Then $f$ and $g$ are given by
\begin{align*}
f(t)&=\frac{ce^{-ct}}{K}\ _{r}F_q(\alpha_1,...,\alpha_r; \beta_1,...,\beta_q;ct)\\
&=\frac{ce^{-ct}}{K}\sum_0^{\infty}\frac{Kp_n(ct)^n}{n!}\ , \\
g(x)&=\sqrt{2 \pi c}e^{-\mu x}\sum_1^{\infty} \frac{p_n}{(n-1)!}\left(\frac{cx}{\sqrt{b^2-2c}}\right)^{n-1/2}I_{n-1/2}(x\sqrt{\mu^2-2c})\ ,
\end{align*}
where $_{r}F_q$ is the generalized hypergeometric series (see \citeN{GradshteynRyzhik00}, 9.14). When $r=q=1$ then $p_n=\frac{(\alpha)_{n-1}}{(\beta)_{n-1}}$. Furthermore, if $\alpha>0,\ \beta>\alpha+1$, then $p_n>0$ for all $n$ and the series $\sum_{n=1}^{\infty}p_n$ converge by Raabe's convergence test:
\[
\lim_{n\uparrow \infty}n(p_n/p_{n+1}-1)=\lim_{n\uparrow \infty}n(\beta-\alpha)/(\alpha+n)=\beta-\alpha>1\ .
\]
 In this case $f$ is given by
 \[
 f(t)=ce^{-ct}\ _1F_1(\alpha,\beta; ct)/K,\ K=\sum_1^{\infty}p_n\ ,
\]
  where $_{1}F_{1}$ is the confluent hypergeometric function of Kummer  (see \citeN{GradshteynRyzhik00}, 9.21).
\end{itemize}

We saw in Section 2 that for any strictly positive boundary, $b(t)>0, \forall t>0$, with corresponding unconditional density $f_b(t)$, the two densities $f_0$ and $f_b$ are related as in (\ref{eqn:unconditionaly0v0}) for all $t$. We will use this result to provide a connection between unconditional densities to the zero boundary and unconditional boundaries in the class $\CC$. To this end, let $\KK$ be the linear integral operator in (\ref{eqn:unconditionaly0v0}) with $b(t)=\mu t$, i.e.
\[
\KK := \frac{e^{-\frac{\mu s^2}{2(t-s)}}\mu s}{\sqrt{2\pi}(t-s)^{3/2}}\ .
\]
Define the new class of densities $\CC^{0}:= \KK \CC$, i.e. $f_0 \in \CC^{0}$ if there exists $f\in \CC$ (see \eqref{linearclass}) such that $f_0 = \KK f$. In this way, $\CC^0$ is a mapping which perturbs the probability measures of the class of densities $\CC$. Suppose $\tau_{X,b}\sim f \in \CC$, from Theorem \ref{mdlinbound} the matching density $g$ is given in \eqref{gamma}. Define the target density $f_0:=\KK f$ which is viewed as a target density for the zero boundary hitting time $\tau_{X,0}$.  By integrating equation \eqref{eqn:generaly0v0} (with $b(t)=\mu t$)  with respect to $g$, one finds that $g$ is a solution to the RFPT for the zero boundary and target density $f_0$. Therefore the pairs $(\mu t,\ f(t))$ and $(0,\ f_0(t))$ result in the same matching density $g$. This result can succinctly be stated as follows.
\begin{cor}
Let $f\in \CC$ and suppose $\tau_{X,0}\sim f_0:=\KK f \in \CC^0$. Then the matching density for the zero boundary and unconditional density $f_0$ is provided by (\ref{gamma}).
\end{cor}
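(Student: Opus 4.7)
The plan is to leverage Theorem~\ref{mdlinbound} together with the pathwise identity \eqref{eqn:generaly0v0}, which expresses the zero-boundary hitting density as a mixture, through the intermediate boundary $\mu t$, of conditional hitting densities. Since $f\in\CC$, Theorem~\ref{mdlinbound} produces a matching density $g$ for the pair $(b(t)=\mu t,\,f)$, given explicitly by \eqref{gamma}. What remains is to verify that this same $g$ solves the RFPT problem for the pair $(b\equiv 0,\,f_0)$, and then to invoke uniqueness.

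First I would multiply \eqref{eqn:generaly0v0} through by $g(x)$ and integrate in $x$ over $[0,\infty)$. All integrands are non-negative (since $b(s)=\mu s\ge 0$, $f(s\mid x)\ge 0$, and $g\ge 0$), so Tonelli's theorem justifies interchanging the order of integration. On the right-hand side this produces the unconditional zero-boundary hitting density when $X\sim g$, which by definition is the proposed target density for the RFPT associated with $b\equiv 0$. On the left-hand side, the inner $x$-integral collapses via Theorem~\ref{mdlinbound} to $\int_0^\infty f(s\mid x)g(x)\,dx=f(s)$, leaving precisely the kernel $\KK$ acting on $f$; by hypothesis this equals $f_0$. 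Hence $\mathbb E^{g}[f_0(t\mid X)]=f_0(t)$, so $g$ solves the RFPT for the zero boundary and target $f_0$.

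Finally, uniqueness follows from Proposition~\ref{prop:laplacetransform}: the zero boundary trivially satisfies \eqref{fredholmcondition} since $\lim_{t\uparrow\infty}\alpha t=+\infty$ for every $\alpha>0$, so any solution in $\GG$ is determined by its Laplace transform $\tilde g(\alpha)=\int_0^\infty e^{-\alpha^2 t/2}f_0(t)\,dt$. The matching density produced above therefore coincides with \eqref{gamma}. The only technical point worth checking carefully is the application of Fubini/Tonelli in the exchange of integrals, but positivity of the integrands makes this routine; no serious obstacle is anticipated, and the corollary is essentially a direct corollary of Theorem~\ref{mdlinbound} combined with the probabilistic interpretation illustrated in Figure~\ref{fig:zeroHitting}.
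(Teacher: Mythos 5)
Your argument is correct and is essentially the paper's own: the corollary is obtained by integrating \eqref{eqn:generaly0v0} (with $b(t)=\mu t$) against the density $g$ from Theorem~\ref{mdlinbound}, so that the left side becomes $\KK f=f_0$ and the right side becomes the unconditional zero-boundary hitting density under $X\sim g$. Your added Tonelli justification and the uniqueness remark via Proposition~\ref{prop:laplacetransform} are fine but only make explicit what the paper leaves implicit.
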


Note that neither of the classes $\CC$ and $\CC^0$ includes the other. For example when $f$ is the density of the exponential distribution, there exists a unique $g$ which matches the pair $(\mu t, \ f)$, for $\mu>0$, since the class of Gamma distributions includes the exponential distribution. However, in Section 2 we saw that when the boundary is zero and the target density is exponential the unique solution to the RFPT Fredholm equation is $g(x)=\sqrt{2 \lambda}\sin(\sqrt{2\lambda}x)$ which is not a density function.


\section{Boundary Transformations}

Suppose there exists a random variable which is the solution to the RFPT for the boundary $b(t)$ and target density $f(t)$. The question that we address in this section is the following. {\it ``How does the target distribution change as the boundary is modified, while the distribution of $X$ is left unchanged?''} We will put particularly emphasis on the linear boundary case.

\subsection{Affine Transformations}

In this section we investigate affine modifications of the boundary. To this end,
denote the boundary specific stopping time
\begin{eqnarray}
\tau_{b(t)}:=\{t>0\ ; \ X+W_t \leq b(t) \} \ ,
\end{eqnarray}
where $X$ solves the RFPT for the pair $(b(t),f_{b(t)}(t))$ and has a density function $g$. Denoting the density of $\tau_{b(t)}$ by $f_{b(t)}$, from (\ref{eqn:mainreal}) (conditional on $X=x$) we obtain:
\begin{eqnarray*}
\int_0^{\infty}e^{-\alpha b(t)-t(\alpha \lambda +\alpha^2/2)}f_{b(t)+\lambda t}(t|x)dt\ =\ e^{-\alpha x}=\int_0^{\infty}e^{-\alpha b(t)-t\alpha^2/2}f_{b(t)}(t|x)dt\ .
\end{eqnarray*}
Multiplying by $g(x)$ and integrating over $x$, one finds the following connection between $f_{b(t)+\lambda t}$ and $f_{b(t)}$:
\begin{eqnarray}
\int_0^{\infty}e^{-\alpha b(t)-t(\alpha \lambda +\alpha^2/2)}f_{b(t)+\lambda t}(t)dt=\tilde{g}(\alpha) =\int_0^{\infty}e^{-\alpha b(t)-t\alpha^2/2}f_{b(t)}(t)dt \ .\label{eqn:generalAffineTrans}
\end{eqnarray}
This result relates a boundary specific transform of the density of $\tau_{b(t)+\lambda t}$ to a boundary specific transform of the density of $\tau_{b(t)}$ {\it and } the Laplace transform of the random starting point. For certain boundaries, the boundary specific transform is particular simple and we investigate the linear case next.

Equation \eqref{eqn:generalAffineTrans} reduces to a particularly simple expression when the boundary is linear. In particular, set $b(t)=\mu t$ and apply an affine transformation to change the slope from $\mu$ to $\nu$, i.e. choose $\lambda = (\nu-\mu)$, then \eqref{eqn:generalAffineTrans} takes the form
\begin{eqnarray}
\tilde{f}_{\nu t}(\alpha \nu+\alpha^2/2)=\tilde{g}(\alpha)=\tilde{f}_{\mu t}(\alpha \mu+\alpha^2/2)  \ . \label{newslope}
\end{eqnarray}
The above equality holds when $\alpha>\max(-\mu,-\nu)$ so that equation (\ref{eqn:matchlaplace}) is valid for both linear boundaries $\mu t$ and $\nu t$. Equation (\ref{newslope}), therefore allows one to represent the Laplace transform of the density of the FPT to the new slope in terms of the density of the FPT of the old slope.
\begin{lem}
Suppose $X$ solves the RFPT problem for the pair $(\mu t,f_{\mu t}(t))$. Then, keeping the distribution of $X$ unchanged, the stopping time of the randomized Brownian motion to the drift adjusted boundary $b(t) = \nu t$  has a density function $f_{\nu t}$ with a Laplace transform
\begin{eqnarray}
\tilde{f}_{\nu t}(s)=\tilde{f}_{\mu t}\left(\nu(\nu-\mu)+s+\sqrt{2}(\mu-\nu)\sqrt{s+\nu^2/2}\right),\quad s>0\ . \label{newslopelaplace}
\end{eqnarray}
\end{lem}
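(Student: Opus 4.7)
The plan is to derive the stated Laplace transform by a change of variable in the identity (\ref{newslope}). That identity reads $\tilde{f}_{\nu t}(\alpha\nu+\alpha^2/2)=\tilde{f}_{\mu t}(\alpha\mu+\alpha^2/2)$ for all $\alpha>\max(-\mu,-\nu)$, and its right-hand side is already expressed in terms of $\tilde{f}_{\mu t}$. The task therefore reduces to the purely algebraic one of inverting the map $\alpha\mapsto\alpha\nu+\alpha^2/2$ to write $\alpha$ in terms of $s$, and then substituting into the right-hand side.

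First, I would set $s=\alpha\nu+\alpha^2/2$ and solve the resulting quadratic $\alpha^2+2\nu\alpha-2s=0$, obtaining $\alpha=-\nu\pm\sqrt{\nu^2+2s}$. Selecting the positive-root branch $\alpha=-\nu+\sqrt{\nu^2+2s}=-\nu+\sqrt{2}\sqrt{s+\nu^2/2}$, one checks that $\alpha+\nu=\sqrt{\nu^2+2s}>0$ for every $s>0$; in the linear boundary setting where $\mu,\nu>0$ this in fact gives $\alpha>0>\max(-\mu,-\nu)$, so (\ref{newslope}) applies at this value of $\alpha$. The negative-root branch is discarded since it would violate the admissibility constraint.

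Second, I would simplify the exponent on the right-hand side of (\ref{newslope}). The defining relation $\alpha^2/2=s-\alpha\nu$ allows us to eliminate the quadratic term cleanly, giving $\alpha\mu+\alpha^2/2=(\mu-\nu)\alpha+s$. Substituting the chosen branch for $\alpha$ then produces $(\mu-\nu)\bigl(-\nu+\sqrt{2}\sqrt{s+\nu^2/2}\bigr)+s=\nu(\nu-\mu)+s+\sqrt{2}(\mu-\nu)\sqrt{s+\nu^2/2}$, which is precisely the expression appearing in (\ref{newslopelaplace}). The only step requiring any care is the branch selection for $\alpha$; otherwise the lemma is essentially an algebraic restatement of (\ref{newslope}), and no substantive analytic obstacle arises since the heavy lifting has already been done in establishing (\ref{newslope}).
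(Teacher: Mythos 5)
Your proof is correct and takes essentially the same route as the paper: the paper's argument also sets $s=\alpha\nu+\alpha^2/2$, solves for $\alpha$ (implicitly on the positive branch), and substitutes into \eqref{newslope} to get $\alpha\mu+\alpha^2/2=\nu(\nu-\mu)+s+\sqrt{2}(\mu-\nu)\sqrt{s+\nu^2/2}$. Your version merely spells out the quadratic inversion and the branch/admissibility check ($\alpha>\max(-\mu,-\nu)$) that the paper leaves implicit.
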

\begin{proof}
Let $s=\alpha \nu+\alpha^2/2$. Then $\alpha \mu +\alpha^2/2=\nu(\nu-\mu)+s+\sqrt{2}(\mu-\nu)\sqrt{s+\nu^2/2}$ and using (\ref{newslope}) we obtain the result. $\boxdot$
\end{proof}

Note that when $s>0$ the quantity in the brackets on the right side of (\ref{newslopelaplace}) is positive for all $\mu>0$ and $\nu \in \mathbb R$ and thus the right hand side exists for any density function $f_{b(t)}$. However, for $\mu<0$ (\ref{newslopelaplace}) is not a proper distribution.

Interestingly, equation  (\ref{newslopelaplace}) allows one to determine the distribution of the FPT to the new boundary without knowledged of the initial density $g$ of the starting point $X$. All that is required, is that $g$ exists and it solves the RFPT problem for the boundary $\mu t$ and the unconditional density $f_{\mu t}$. In the specific case when the density $f_{\mu t}$ is Gamma$(\alpha,\beta)$, the relation  (\ref{newslopelaplace}) then reads
\begin{align}
\tilde{f}_{\nu t}(s)&=(1/\alpha)^v\left(\frac{1}{\alpha}+\nu(\nu-\mu)+s+\sqrt{2}(\mu-\nu)\sqrt{s+\nu^2/2}\right)^{-\beta}
\nonumber \\
&=\left(A + \alpha s + B\sqrt{s+C} \right)^{-\beta} \ , \label{newslopegamma}
\end{align}
where $A=1+ \alpha\nu(\nu-\mu)$, $B=\alpha\sqrt{2}(\mu-\nu)$, and $C=\nu^2/2$. In Figure \ref{fig:Perburb}, we numerically invert the Laplace transform and plot $f_{\nu t}$ when $\mu=1$ for a few choices of $\nu$. This idea of changing drifts has potential applications in the context of finance and insurance, where such a drift change corresponds to changing probability measures from the historical measure to a valuation measure.

As a final result for affine transformations, through Lemma \ref{laplacegammalemma} in Appendix A, we find the following explicit representation for $f_{\nu t}$ when $\mu > \nu$:
\begin{align*}
f_{\nu t}(t)
&=\frac{(B/\alpha)\alpha^{-\beta}e^{-tC}}{2\Gamma(\beta)\sqrt{\pi}}\int_0^t(t-x)^{-3/2} x^\beta \exp\left\{-x(-C+A/\alpha)-\tfrac{x^2(B/\alpha)^2}{4(t-x)}\right\}\ dx\ .
\end{align*}
This result may be more useful than the laplace transform representation \eqref{newslopegamma} in certain cases.

\subsection{Scaling of Boundaries}

In this section we analyze transformations induced by a scaling of the boundary. In particular, we assume the boundary $b_{\lambda}(t)$ is indexed by a parameter $\lambda>0$ and satisfies the following scaling property
\begin{eqnarray}\label{scalingprop}
b_{\lambda}(t)=b_1(\lambda^2 t)/\lambda\ .
\end{eqnarray}
Note that the linear boundaries $b_\lambda(t) = \lambda t$ satisfies this scaling property, further, scaling corresponds to modifying the slope. We are interested in how the stopping times
\[
\tau_{X,\lambda}=\inf \{t>0; X+W_t \leq  b_{\lambda}(t)\},\ X \geq 0\ ,
\]
indexed by the scale parameter, are related to one another.  We will see that, through the scaling property of Brownian motions, we can investigate randomizing over the scale parameter rather than the starting point of the Brownian motion. Such a randomization can be viewed as a generalization of randomizing the slope of a linear boundary. In the RFPT problem we encountered the class $\CC$ (see \eqref{linearclass}) of mixtures of Gamma distributions. This class has a restriction on the minimum scale parameter in relation to the slope of the linear boundary. Thus, it is plausible that the randomization over the boundaries scale parameter may allow this restriction to be removed or at least modified. In the end, we hope to be able to probe a larger class of target densities. In light of these comments, we define the following modification to the RFPT problem.
\begin{defn}[Randomized Scaling First Passage Time Problem (RSFPT)]
Given a scale class of boundaries $b_\lambda\!:\![0,\infty)\rightarrow \RR$, and a probability measure $\mu$ on $[0,\infty)$, find a random variable $X$ such that $\mu$ is the law of the randomized scaling first passage time $\tau_{1,\lambda X}$.
\end{defn}

To solve this problem we assume, as usual, that the boundary is regular in the sense that $P(\tau_{X,\lambda}=0)=0$ for all values of $\lambda>0$ and $X$. As a consequence of the scaling properties of Brownian motion we have the following easy Lemma.
\begin{lem}\label{randomparameter}
If $X$ has no probability mass at zero, then
\begin{align}
\tau_{1,\lambda X}\stackrel{d}{=}\frac{1}{X^2}\tau_{X,\lambda} \ .
\label{eqn:randomScale}
\end{align}
\end{lem}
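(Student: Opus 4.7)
The plan is to exploit the Brownian scaling property together with the boundary scaling relation $b_\lambda(t)=b_1(\lambda^2 t)/\lambda$, working first conditionally on $\{X=x\}$ and then integrating out. The key idea is that both $\tau_{1,\lambda x}$ and $\tau_{x,\lambda}$ can be expressed in terms of a common first passage time to the \emph{unit}-scale boundary $b_1$ started from $\lambda x$, after which comparing the scale factors yields the identity.

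First I would rewrite $\tau_{x,\lambda}$ (the conditional version of $\tau_{X,\lambda}$ given $X=x$). Using the scaling hypothesis, the defining event $x+W_t\le b_\lambda(t)$ is equivalent to $\lambda x+\lambda W_t\le b_1(\lambda^2 t)$. Changing the time variable to $u=\lambda^2 t$ and invoking Brownian scaling $\lambda W_{u/\lambda^2}\stackrel{d}{=}W_u$, one obtains
\[
\tau_{x,\lambda}\;\stackrel{d}{=}\;\frac{1}{\lambda^2}\,\tau_{\lambda x,1},
\]
where $\tau_{\lambda x,1}=\inf\{u>0:\lambda x+W_u\le b_1(u)\}$. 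Next I would run the same argument on $\tau_{1,\lambda x}$: the event $1+W_t\le b_{\lambda x}(t)=b_1((\lambda x)^2 t)/(\lambda x)$ becomes $\lambda x+\lambda x W_t\le b_1(\lambda^2 x^2 t)$; with $s=\lambda^2 x^2 t$ and Brownian scaling $\lambda x W_{s/(\lambda x)^2}\stackrel{d}{=}W_s$,
\[
\tau_{1,\lambda x}\;\stackrel{d}{=}\;\frac{1}{\lambda^2 x^2}\,\tau_{\lambda x,1}.
\]
Comparing the two displays yields the conditional identity $\tau_{1,\lambda x}\stackrel{d}{=}\tfrac{1}{x^2}\tau_{x,\lambda}$ for every $x>0$.

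Finally, since $X$ is assumed independent of $W$ and carries no mass at zero, the division by $X^2$ is a.s. well-defined, and I would lift the conditional equality to the unconditional statement \eqref{eqn:randomScale} by integrating against the law of $X$ (equivalently, checking that the two sides have the same characteristic function by conditioning on $X$ and applying Fubini). The main (and essentially only) obstacle is verifying that the Brownian scaling step is valid in distribution rather than pathwise — this is handled by noting that $\tau_{\lambda x,1}$ is a functional of the whole Brownian path, so replacing $(\lambda W_{\cdot/\lambda^2})$ or $(\lambda x W_{\cdot/(\lambda x)^2})$ by a standard Brownian motion preserves the distribution of the resulting first-passage functional. No other regularity beyond the stated assumption $P(\tau_{X,\lambda}=0)=0$ and $P(X=0)=0$ is needed.
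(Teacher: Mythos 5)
Your proposal is correct and uses essentially the same argument as the paper: Brownian scaling combined with the boundary scaling relation $b_\lambda(t)=b_1(\lambda^2 t)/\lambda$, applied conditionally on $X=x$ and then lifted to the unconditional statement via independence of $X$ and $W$. The only cosmetic difference is that you pass both stopping times through the common intermediate $\tau_{\lambda x,1}$, whereas the paper transforms $\tau_{X,\lambda}/X^2$ directly into $\tau_{1,\lambda X}$ in a single chain of set identities and one scaling step.
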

\begin{proof}
First, conditioning on $X\ne 0$, and using the scaling property of Brownian motions and the class of boundaries, we have
\begin{align*}
\frac{\tau_{X,\lambda}}{X^2}
&= \inf \left\{t/X^2>0 \ ; \ X+W_t \leq  b_{\lambda}(t) \right\}\\
&= \inf \left\{t/X^2>0\ ; \ 1+W_t/X \leq  b_{1}(\lambda^2 t)/(\lambda X) \right\}\\
&\stackrel{d}{=}  \inf \left\{u>0 \ ; \ 1+W_u \leq  b_{1}(\lambda^2 X^2 u)/(\lambda X) \right\}\\
&= \inf \left\{u>0\ ; \ 1+W_u \leq  b_{\lambda X}(u) \right\}\\
&=\tau_{1,\lambda X} \ .
\end{align*}
Since the equality holds for every $X\ne 0$, and $X$ has no mass at $0$, we have the result. $\boxdot$
\end{proof}

Suppose we have a solution $g_\lambda(x)$ to the RFPT for the boundary/density pair $(b_\lambda(t), f_\lambda(t))$, i.e. $\tau_{X,\lambda}$ has density $f_\lambda$ when $X$ has density $g_\lambda$. Then, through Lemma \ref{randomparameter}, the r.h.s. of \eqref{eqn:randomScale} is known, and therefore the randomized scaling first passage time $\tau_{1,\lambda X}$ is known and we therefore have a solution to the RSFPT.

Lemma \ref{randomparameter} provides a solution to another related FPT problem where the intercept and the slope are both randomized simultaneously. In particular, consider the stopping time $\tau_{X,\frac{a}{X}}$ (for $a>0$) with $X$ having no probability mass at zero -- we can call the problem of finding the distribution of $X$ given the distribution of $\tau_{X,\frac{a}{X}}$ the {\it randomized double scaling FPT problem (RDSFPT)}. To solve it, consider the three random variables $\ln X$, $\ln \tau_{X,\frac{a}{X}}$ and $\ln \tau_{1,a}$ and denote their mgf's by $\tilde g$, $\tilde f$ and $\tilde h$, respectively. Then we have the following result.
\begin{cor}\label{cor:random}
If $X$ has no probability mass at zero, then 
\begin{equation}
\tilde{g}(2\alpha)=\frac{\tilde{f}(\alpha)}{\tilde{h}(\alpha)}\label{eqn:random2}
\end{equation}
\end{cor}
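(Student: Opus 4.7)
The plan is to use Lemma \ref{randomparameter} to establish the distributional identity $\tau_{X,a/X} \stackrel{d}{=} X^2 Y$ where $Y$ is a copy of $\tau_{1,a}$ independent of $X$, then translate this to the multiplicative relation between the claimed mgf's by taking logarithms.

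First, I would obtain the key distributional identity. Lemma \ref{randomparameter} (and more transparently the chain of equalities in its proof) actually works pointwise in the value of $X$: for each fixed $x > 0$, the Brownian scaling argument gives $\tau_{x,\lambda}/x^2 \stackrel{d}{=} \tau_{1,\lambda x}$ as laws over the Brownian path. Choosing $\lambda = a/x$ yields
\begin{equation*}
\frac{\tau_{x,a/x}}{x^2} \stackrel{d}{=} \tau_{1,a}
\end{equation*}
for every $x \neq 0$. Because $X$ has no atom at $0$ and is independent of $W$, this conditional identity upgrades to the joint statement that $\tau_{X,a/X}/X^2$ is independent of $X$ with the law of $\tau_{1,a}$. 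Equivalently, $\tau_{X,a/X} \stackrel{d}{=} X^2\, Y$ with $Y \stackrel{d}{=} \tau_{1,a}$ and $Y \perp X$.

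Next, I would take logarithms: $\ln \tau_{X,a/X} \stackrel{d}{=} 2\ln X + \ln Y$, again as a sum of independent random variables. Applying the mgf to both sides and using independence,
\begin{equation*}
\tilde{f}(\alpha) = \mathbb{E}\!\left[e^{\alpha(2\ln X + \ln Y)}\right] = \mathbb{E}\!\left[e^{2\alpha \ln X}\right]\,\mathbb{E}\!\left[e^{\alpha \ln Y}\right] = \tilde{g}(2\alpha)\,\tilde{h}(\alpha),
\end{equation*}
for every $\alpha$ in a common strip of convergence, and rearranging gives the advertised identity $\tilde{g}(2\alpha) = \tilde{f}(\alpha)/\tilde{h}(\alpha)$.

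There is essentially no substantive obstacle beyond careful bookkeeping: the scaling identity is the content of the preceding lemma, and the mgf relation is just its logarithmic reformulation. The only point that requires mild care is justifying that $\tau_{X,a/X}/X^2$ is not merely equal in law to $\tau_{1,a}$ but is also independent of $X$; this follows from the fact that the Brownian scaling argument is performed conditionally on $X=x$ and produces a distribution that does not depend on $x$, together with the independence of $X$ and $W$. A secondary minor point is that $\tilde{g}$, $\tilde{f}$, $\tilde{h}$ must share a nonempty common domain of $\alpha$, but the statement is to be interpreted wherever all three are finite.
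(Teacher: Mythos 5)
Your proof is correct and follows essentially the same route as the paper: apply Lemma \ref{randomparameter} with $\lambda = a/X$, take logarithms to write $\log\tau_{X,a/X} \stackrel{d}{=} 2\log X + \log\tau_{1,a}$ with the two terms independent, and factor the mgf. The only difference is that you spell out the independence justification (conditioning on $X=x$ and noting the conditional law does not depend on $x$) which the paper merely asserts.
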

\begin{proof}
From Lemma \ref{randomparameter} we have $\tau_{1,a}\stackrel{d}{=}\frac{1}{X^2} \tau_{X,\frac{a}{X}}$. Consequently, $\log(\tau_{X,\frac{a}{X}})\stackrel{d}{=}  2\log (X)+\log(\tau_{1,a})$; furthermore, $\log(X)$ is independent of $\log(\tau_{1,a})$. This independence neatly implies \eqref{eqn:random2} for the mgf's. $\boxdot$
\end{proof}
Corollary \ref{cor:random} implies that if we know the mgf of $X$ then we have explicitly the mgf of $\tau_{X,\frac{a}{X}}$ and vice versa -- assuming the mgf of $\tau_{1,a}$ is known. If those mgfs are invertible we have completely solved the RDSFPT.

\section{Conclusions}

In this article we introduced a problem related to both the forward and inverse first passage time problems. The new problem, coined the randomized first passage time problem (RFPT), seeks the density $g$ of the initial starting point $X$ of a Brownian motion such that the density of the FPT $\tau_X$ to a fixed boundary $b$ is a given target function $f$. We prove two uniqueness results (see Prop. \ref{prop:hermitetransform} and \ref{prop:laplacetransform}) and provide an existence result (see Thm. \ref{existence}) for $g$. Furthermore, when the boundary is linear we provide several explicit examples of the density $g$ based on the target $f$ being a mixture of gamma distributions. Finally, we provide examples of how the target densities for boundaries in an affine and scaling class are related to one another.

There are a number of theoretical directions left open. The first being making the existence result easier to check for boundaries other than the linear and square-root boundaries. The second is making the connection to Skorohod's embedding problem more explicit. We have already made progress on this direction and will report the results in another article. A third theoretical direction is to demonstrate the consistency of the existence results in Propsitions \ref{prop:hermitetransform} and \ref{prop:laplacetransform}.

These results have several applications ranging from finance, to cosmology, to biology. In the financial context, the Brownian motion can represent the log of the leverage ratio of a company. When this leverage ratio hits a critical barrier (the boundary), the company defaults. The distribution of the default time is observable through yields of certain financial instruments (such as bonds and credit default swaps). Our results then provide a methodology for obtaining the distribution of initial log leverage ratio consistent with market prices. As well, the affine and scaling transformations allows one to change measure from statistical to risk-neutral (or pricing) measures in a natural manner.

\section{Acknowledgements}

This work is supported in part by NSERC and MITACS.

\appendix
\section{Proofs of Results}
\textbf{Proof of Lemma \ref{prop:hermitetransform}:}\
Suppose $b(t)>0$ for some $t>0$ so that for $y=-x$ the condition $y=-x<b(t)-x$ is satisfied. Then the kernel of (\ref{eqn:general1}) becomes free of $x$. Multiply (\ref{eqn:general1}) by $g(x)\in G$, assuming that there exists such a $g$ which solves (\ref{eqn:matchmain}), and integrate $x$ on $(0,\infty)$ to obtain:
\begin{eqnarray}
\int_0^{\infty}\frac{e^{-\frac{x^2}{2t}}H_n(x/\sqrt{2t})}{t^{(n+1)/2}}g(x)dx=\int_0^{\infty}\int_0^t\frac{e^{-\frac{b(s)^2}{2(t-s)}}}{(t-s)^{\frac{n+1}{2}}}H_n\left(\frac{b(s)}{\sqrt{2(t-s)}}\right)f(s|x)g(x)dsdx \label{eqn:hermite1}
\end{eqnarray}
Next we examine the right hand side of (\ref{eqn:hermite1}) and justify the exchange of the order of integration by the use of Fubini's theorem after we show the quantity under the double integral is absolutely integrable. Thus,using (\ref{hermiteinequality}), we have:
\begin{eqnarray}
&&\int_0^{\infty}\int_0^te^{-\frac{(b(s))^2}{2(t-s)}}\frac{|H_n\left(\frac{b(s)}{\sqrt{2(t-s)}}\right)|}{(t-s)^{(n+1)/2}}f(s|x)|g(x)|dsdx\\
&\leq& z_n\int_0^{\infty}\int_0^te^{-\frac{(b(s))^2}{2(t-s)}}\frac{e^{q_n\frac{b(s)}{\sqrt{(t-s)}}}}{(t-s)^{(n+1)/2}}f(s|x)|g(x)|dsdx\\
&=&z'_n\int_0^{\infty}\int_0^t\frac{e^{-\frac{1}{2}\left(\frac{b(s)}{\sqrt{(t-s)}}-q_n\right)^2}}{(t-s)^{(n+1)/2}}f(s|x)|g(x)|dsdx \label{eqn:proofhermite}
\end{eqnarray}
where $q_n=\sqrt{2\left\lfloor n/2 \right\rfloor}$ and $z_n=2^{n/2-\left\lfloor n/2\right\rfloor}(n!/\left\lfloor n/2\right\rfloor!)$ and $z'_n=2^{n/2}(n!/\left\lfloor n/2\right\rfloor!)$. Let $p_n(s,t)=\frac{e^{-\frac{1}{2}(\frac{b(s)}{\sqrt{(t-s)}}-q_n)^2}}{(t-s)^{(n+1)/2}}$. We have $0\leq p_n(0,t)<t^{-(n+1)/2}$ and $p_n(t,t)=0$ since $b$ is continuous. Thus $p_n(.,s)$ is continuous and since it is finite at $0$ and at $t$ then it is bounded
on the interval $[0,t]$ by, say, $M_n(t)$. Then, continuing from (\ref{eqn:proofhermite}) we have:
\begin{eqnarray*}
z'_n\int_0^{\infty}\int_0^t\frac{e^{-\frac{1}{2}\left(\frac{b(s)}{\sqrt{2(t-s)}}-q_n\right)^2}}{(t-s)^{(n+1)/2}}f(s|x)g(x)dsdx&\leq& z'_nM_n(t)\int_0^{\infty}\int_0^tf(s|x)|g(x)|dsdx\\
&=&z'_nM_n(t)\int_0^{\infty}F(t|x)|g(x)|dx
\end{eqnarray*}
where $F(t|x)$ is the conditional cdf. Since $b$ is continuous, for each $t>0$, we can find an $k(t)<0$ and $N(t)\geq -k(t)$ such that $b(s)-x<-k(t)-x<0$ for all $s\leq t$ and $x\geq N(t)$. Therefore, for $x\geq N(t)$, $F(t|x) \leq 2\Phi\left(-\frac{x+k(t)}{\sqrt{t}} \right)\leq \frac{\sqrt{t}e^{-(x+k(t))^2/(2t)}}{x+k(t)}$ and continuing from the last equality we obtain:
\begin{eqnarray*}
&&z'_nM_n(t)\int_0^{\infty}F(t|x)|g(x)|dx=\\
&&=z'_nM_n(t)\left[\int_0^{N(t)}F(t|x)|g(x)|dx+\int_{N(t)}^{\infty}F(t|x)|g(x)|dx\right]\\
&&\leq z'_nM_n(t)\left[\int_0^{N(t)}|g(x)|dx+\sqrt{t} \int_{N(t)}^{\infty}\frac{e^{-(x+k(t))^2/(2t)}}{x+k(t)}|g(x)|dx\right]
\end{eqnarray*}
The integrals in the square brackets are both finite for all $g\in G$ (and more generally for all $g\sim O(e^{x^r}), 0\leq r<2$ for large $x$ and which are absolutely integrable in the neighborhood of zero) and hence we can exchange the order of integration and the right side of (\ref{eqn:hermite1}) becomes $a_n(t)$. Furthermore, the left side of (\ref{eqn:hermite1}) can be written as
\[
\frac{\sqrt{2}}{t^{n/2}}\int_0^{\infty}e^{-u^2}H_n(u)g(u\sqrt{2t})du\ ,
\]
which is the Hermite transform of $g(u\sqrt{2t})$. Thus, since the Hermite functions form a complete orthogonal basis in the space $L^2(e^{-u^2})$, if the Fredholm equation (\ref{eqn:matchmain}) has a solution $g\in G\bigcap L^2(e^{-u^2})$, it is unique (by the uniqueness of the Hermite transform) and it is given by (\ref{eqn:hermitetransform}) provided that $b$ is continuous and $b(t)>0$ for some $t$. $\boxdot$

\noindent \textbf{Proof of Lemma \ref{completemonotone}:}\
Let $b$ be a continuous function. Using (\ref{hermitedefinition}) and (\ref{hermiteinequality}) we have
\begin{eqnarray*}
|\frac{d^n}{d\alpha^n}\exp\{-(\alpha\sqrt{t/2}+b(t)/\sqrt{2t})^2 \}|&=&|(-\sqrt{t/2})^ne^{-\left(\frac{b(t)+\alpha t}{\sqrt{2t}}\right)^2}H_n\left(\frac{b(t)+\alpha t}{\sqrt{2t}}\right)|\\
&\leq& K_nt^{n/2}e^{-\left(\frac{b(t)+\alpha t}{\sqrt{2t}} \right)^2+2\sqrt{\left\lfloor n/2 \right\rfloor}\frac{b(t)+\alpha t}{\sqrt{2t}}}
\end{eqnarray*}
where $K_n=2^{-\left\lfloor n/2\right\rfloor}(n!/\left\lfloor n/2\right\rfloor!)$. Thus
\begin{eqnarray*}
\int_0^{\infty}|\frac{d^n}{d\alpha^n}e^{-\alpha b(t)-\alpha^2t/2}|f(t)dt&=&\int_0^{\infty}|\frac{d^n}{d\alpha^n}e^{-\left(\frac{b(t)+\alpha t}{\sqrt{2t}}\right)^2 }|e^{b^2(t)/(2t)}f(t)dt\\
&\leq &K_n\int_0^{\infty}t^{n/2}e^{-\alpha b(t)-\alpha^2t/2+2\sqrt{\left\lfloor n/2 \right\rfloor}\frac{b(t)+\alpha t}{\sqrt{2t}}}f(t)dt
\end{eqnarray*}
Let $\epsilon_n$ be such that $t^{n/2}e^{-\alpha b(t)-\alpha^2t/2+2\sqrt{\left\lfloor n/2 \right\rfloor}\alpha\sqrt{t/2}}<1$ for all $t<\epsilon_n$. Then the last integral is bounded by
\begin{eqnarray*}
&&K_n\int_0^{\epsilon_n}e^{\sqrt{2\left\lfloor n/2 \right\rfloor}b(t)/\sqrt{t}}f(t)dt+K_n\int_{\epsilon_n}^{\infty}t^{n/2}e^{-\alpha b(t)-\alpha^2t/2+2\sqrt{\left\lfloor n/2 \right\rfloor}\frac{b(t)+\alpha t}{\sqrt{2t}}}f(t)dt
\end{eqnarray*}
Since $b$ satisfies $\lim_{t\uparrow \infty}(b(t)+\alpha t)>-\infty$ then if $\lim_{t \uparrow \infty}b(t) <0$, it follows that $b(t)\sim -t^{\gamma},\ \gamma<1$ for large $t$ and the power in the exponent of the second integral above is dominated by $-\alpha^2 t/2$. Similarly, if $\lim_{t \uparrow \infty}b(t) \geq 0$ then $\alpha b(t)$ dominates $b(t)/\sqrt{2t}$. Therefore, the second integral is finite for all $\alpha>0,\ n\geq 1,$ since $$\lim_{t \uparrow \infty}t^{n/2}e^{-\alpha b(t)-\alpha^2t/2+2\sqrt{\left\lfloor n/2 \right\rfloor}\frac{b(t)+\alpha t}{\sqrt{2t}}}=0$$ and $b$ is continuous. Under the hypothesis of Lemma \ref{completemonotone} it follows that
\begin{eqnarray*}
\int_0^{\infty}|\frac{d^n}{d\alpha^n}e^{-\alpha b(t)-\alpha^2t/2}|f(t)dt<\infty
\end{eqnarray*}
and therefore
\begin{eqnarray*}
(-1)^n\frac{d^n}{d\alpha^n}r(\alpha)&=& (-1)^n\int_0^{\infty}\frac{d^n}{d\alpha^n}e^{-\alpha b(t)-\alpha^2t/2}f(t)dt\\
&=&\int_0^{\infty}\left(\frac{t}{2}\right)^{n/2}e^{-\alpha b(t)-\alpha^2t/2}H_n\left(\frac{b(t)+\alpha t}{\sqrt{2t}}\right)f(t)dt
\end{eqnarray*}
Complete monotonicity of $r(\alpha)$ is thus equivalent to the last integral being nonnegative for all $\alpha>0,\ n\geq 1$. This completes the proof. $\boxdot$\\

\noindent \textbf{Proof of Corrolary \ref{linboundexist}:}\
When $b(t)=\mu t$ and $f(t)=\frac{t^{b-1}e^{-t/a}}{\Gamma(b)(a)^b},\ \mu,a,b>0,$ the hypothesis of Lemma \ref{completemonotone} hold and thus we only need to check the complete monotonicity of $r(\alpha)$ by showing that (\ref{monotonecheck}) is nonnegative for all $\alpha>0$ and $n \geq 1$. In this case (\ref{monotonecheck}) becomes
\begin{eqnarray*}
&&\int_0^{\infty}t^{n/2}e^{-t(\alpha^2+2\alpha \mu)/2}H_n\left(\frac{(\alpha+\mu)\sqrt{t}}{\sqrt{2}}\right)\frac{t^{b-1}e^{-t/a}}{\Gamma(b)a^b}dt=\\
&&=\frac{2}{\Gamma(b)a^b}\int_0^{\infty}x^{n+2b-1}e^{-x^2(\alpha^2+2\alpha \mu+2/a)/2}H_n\left(\frac{(\alpha+\mu)x}{\sqrt{2}}\right)dx
\end{eqnarray*}
The last quantity is strictly positive for $n=1$ and all $\alpha>0$ since $H_1(z)=2z$. Thus, set $n=2k+\delta,\ k\geq 1,$ where $\delta$ is $0$ or $1$. Then, using \citeN{Prudnikov86} (2.20.3(4)), we obtain:
\begin{eqnarray*}
&&\frac{2}{\Gamma(b)a^b}\int_0^{\infty}x^{2k+\delta+2b-1}e^{-x^2(\alpha^2+2\alpha \mu+2/a)/2}H_{2k+\delta}\left(\frac{(\alpha+\mu)x}{\sqrt{2}}\right)dx=\\
&&=\frac{2}{\Gamma(b)a^b}C_k(\alpha)(-1)^k(1/2-k-b)_k\Gamma(k+\delta+b)\times\\
&&\times _{2}F_1\left(k+b+\delta/2,k+b+(\delta+1)/2; b+1/2;\frac{\mu^2-2/a}{(\mu+\alpha)^2} \right)=\\
&&=\frac{2}{\Gamma(b)a^b}C_k(\alpha)\Gamma(k+\delta+b)(-1)^{2k}(k+b-1/2)(k+b-1/2-1)...\times\\
&&\times(k+b-1/2-(k-1)) _{2}F_1\left(k+b+\frac{\delta}{2},k+b+\frac{\delta+1}{2}; b+\frac{1}{2};\frac{\mu^2-2/a}{(\mu+\alpha)^2} \right)
\end{eqnarray*}
where $(.)_k$ is the Pochhammer symbol (see (\ref{pochhammer}) below), $_{2}F_1$ is the Hypergeometric function (see (\ref{hgfunction}) below) and $C_k(\alpha)=\frac{2^{3k+b-1+3\delta/2}}{(\alpha+\mu)^{2k+2b+\delta}}$. Thus, we see that (\ref{monotonecheck}) is nonnegative whenever $_{2}F_1 \geq 0$. This is the case when $\mu^2-2/a \geq 0$. Finally, the existence of $X$ is guaranteed by Theorem \ref{existence}. This completes the proof. $\boxdot$\\
\begin{lem}
\label{laplacegammalemma}
Suppose $v,d,B,C>0$ are positive constants and $A\in \mathbb R$ and such that $d+s+A+B\sqrt{C+s}>0$ for any $s>0$. Then the Laplace transform of $$h(t):=\frac{B}{2\Gamma(v)\sqrt{\pi}}\int_0^t(t-x)^{-3/2}x^ve^{-C(t-x)-x(d+A)-\frac{x^2B^2}{4(t-x)}}dx $$ is given by $$\tilde{h}(s)=\left(d+A+s+B\sqrt{s+C} \right)^{-v}$$ for $s>0$.
\end{lem}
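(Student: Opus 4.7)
The plan is to compute $\tilde{h}(s)=\int_0^\infty e^{-st}h(t)\,dt$ directly by interchanging the order of integration and then reducing the inner integrals to two standard formulas. First I would substitute the definition of $h$ and apply Fubini's theorem to swap the $t$- and $x$-integrals, yielding
\[
\tilde{h}(s)=\frac{B}{2\Gamma(v)\sqrt{\pi}}\int_0^\infty x^v e^{-x(d+A)}\int_x^\infty e^{-st}(t-x)^{-3/2}\exp\!\left\{-C(t-x)-\tfrac{x^2B^2}{4(t-x)}\right\}dt\,dx.
\]
Absolute integrability (needed to justify the swap) is controlled as $t\to x^+$ by the Gaussian-type factor $\exp\{-x^2B^2/(4(t-x))\}$ (for $x>0$), as $t\to\infty$ by $\exp\{-(s+C)(t-x)\}$, as $x\to\infty$ by $\exp\{-(d+A+s+B\sqrt{s+C})x\}$ invoking the standing hypothesis on $A$, and as $x\to 0^+$ by the factor $x^v$ with $v>0$.

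Next, the substitution $u=t-x$ in the inner integral separates the $x$- and $u$-dependence, producing
\[
\tilde{h}(s)=\frac{B}{2\Gamma(v)\sqrt{\pi}}\int_0^\infty x^v e^{-x(d+A+s)}\int_0^\infty u^{-3/2}\exp\!\left\{-(s+C)u-\tfrac{x^2B^2}{4u}\right\}du\,dx.
\]
I would then invoke the classical L\'evy--Laplace identity
\[
\int_0^\infty u^{-3/2}e^{-\alpha u-\beta/u}\,du=\sqrt{\pi/\beta}\,e^{-2\sqrt{\alpha\beta}},\qquad \alpha,\beta>0,
\]
with $\alpha=s+C$ and $\beta=x^2B^2/4$. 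This collapses the inner integral to $\tfrac{2\sqrt{\pi}}{xB}\,e^{-xB\sqrt{s+C}}$, which (conveniently) cancels the prefactor $B/(2\sqrt{\pi})$ and removes exactly one power of $x$.

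Substituting back leaves
\[
\tilde{h}(s)=\frac{1}{\Gamma(v)}\int_0^\infty x^{v-1}\exp\!\left\{-x\bigl(d+A+s+B\sqrt{s+C}\bigr)\right\}dx=\bigl(d+A+s+B\sqrt{s+C}\bigr)^{-v},
\]
the last equality being a Gamma integral, valid precisely because $d+A+s+B\sqrt{s+C}>0$ by assumption. I expect the only real obstacle is tidying up the Fubini justification near $x=0$ and $u\to 0^+$; once that is handled, the two identities (L\'evy--Laplace and Gamma) do all the arithmetic work.
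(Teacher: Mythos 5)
Your proposal is correct and follows essentially the same route as the paper: interchange the order of integration, substitute $u=t-x$, evaluate the inner integral in closed form, and finish with a Gamma integral. The only cosmetic difference is that the paper evaluates the inner integral via Gradshteyn--Ryzhik 3.471(9), obtaining $K_{-1/2}\left(xB\sqrt{C+s}\right)$ and then simplifying, whereas you invoke the equivalent L\'evy--Laplace identity $\int_0^\infty u^{-3/2}e^{-\alpha u-\beta/u}\,du=\sqrt{\pi/\beta}\,e^{-2\sqrt{\alpha\beta}}$ directly, which is the same computation.
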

\begin{proof} We compute the Laplace transform directly:
\begin{eqnarray*}
\tilde{h}(s)&=&\int_0^{\infty}e^{-st}h(t)dt=\frac{B}{2\Gamma(v)\sqrt{\pi}}\int_0^{\infty}x^ve^{-x(d+A+s)}\int_0^{\infty}u^{-3/2}e^{-u(C+s)-\frac{x^2B^2}{4u}}dudx\\
&=&\frac{B}{\Gamma(v)\sqrt{\pi}}\int_0^{\infty}x^ve^{-x(d+A+s)}\left(\frac{x^2B^2}{4(C+s)}\right)^{-1/4}K_{-1/2}\left(xB\sqrt{C+s} \right)dx\\
&=&\frac{B}{2\Gamma(v)\sqrt{\pi}}\frac{2\sqrt{\pi}}{B}\int_0^{\infty}x^{v-1}e^{-x(d+A+s+B\sqrt{C+s})}dx\\
&=&\left(d+A+s+B\sqrt{s+C} \right)^{-v}  \boxdot
\end{eqnarray*}
where we have used equation (3.471(9)) from \citeN{GradshteynRyzhik00} in the second line above.
\end{proof}

\section{Special Functions}

In this Appendix, we collect several useful formulae concerning the parabolic cylinder, Hermite and Hypergeometric functions.

\subsection{Parabolic Cylinder Function} \label{sec:ParabolicCylinder}
\label{pcf}

The parabolic cylinder function can be characterized as the solutions to the following differential equation indexed by $p$
\begin{eqnarray}
\frac{d^2u}{dz^2}+\left(p+1/2-\frac{z^2}{4}\right)u=0 \ . \label{diffeqn1}
\end{eqnarray}
The solutions are $u=D_p(z),\ D_p(-z),\ D_{-p-1}(iz),\ D_{-p-1}(-iz)$. These functions also admit and integral representation for $p<0$:
\begin{eqnarray}
D_p(z)=\frac{e^{-z^2/4}}{\Gamma(-p)}\int_0^{\infty}e^{-xz-x^2/2}x^{-p-1}dx \ . \label{pcfintegral}
\end{eqnarray}
As well, the parabolic cylinder functions for integer parameters are related to the Hermite polynomials
\begin{eqnarray}
&&D_n(z)=2^{-n/2}e^{-z^2/4}H_n\left(\frac{z}{\sqrt{2}}\right) \ . \label{pcfHn}
\end{eqnarray}
Here $H_n$ is the Hermite polynomial of degree $n$.

\subsection{Hermite Polynomials}

The Hermite polynomials are defined as the re-scaled $n$-fold derivative of the gaussian density as follows
\begin{equation}
H_n(x)=(-1)^n e^{x^2}\frac{d^n}{dx^n}\left(e^{-x^2} \right)\ .\label{hermitedefinition}
\end{equation}
Some useful bounds on the Hermite polynomials are
\begin{equation}
|H_n(x)| \leq 2^{n/2-\left\lfloor n/2\right\rfloor}(n!/\left\lfloor n/2\right\rfloor!) e^{2x\sqrt{\left\lfloor n/2\right\rfloor}} \ . \label{hermiteinequality}
\end{equation}
Finally, the Hermite polynomials form a complete basis on the Hilbert space $L^2(\mathbb R, e^{-x^2})$ and satisfy the orthogonality relation
\begin{equation}
\int_{-\infty}^{\infty}H_m(x)\frac{H_n(x)}{\sqrt{\pi}2^n n!}=\delta_n(m)\ .
\end{equation}
Here $\delta_n(m)=1$ if $m=n$ and zero otherwise.

\subsection{Hypergeometric series}

The Hypergeometric function is defined as:
\begin{eqnarray}
_{2}F_1(a,b;c;x)=1+\frac{ab}{c.1}x+\frac{a(a+1)b(b+1)}{c(c+1).1.2}x^2+...=\sum_{n=0}^{\infty}\frac{(a)_n(b)_n x^n}{(c_n)n!} \ , \label{hgfunction}
\end{eqnarray}
where the Pochhammer symbol $(z)_n$ denotes
\begin{eqnarray}
(z)_n:=z(z+1)...(z+n-1),\ (n=1,2,...),\ (z)_0=1 \ . \label{pochhammer}
\end{eqnarray}

\bibliographystyle{chicago}
\bibliography{matchden_paperbib}

\end{document}